\newcommand{\be}{\begin{equation}}
\newcommand{\ee}{\end{equation}}
\newcommand{\beq}{\begin{eqnarray}}
\newcommand{\eeq}{\end{eqnarray}}
\newtheorem{thm}{Theorem} [section]
\newtheorem{cor}[thm]{Corollary}
\newtheorem{lem}[thm]{Lemma}
\theoremstyle{definition}
\theoremstyle{remark}
\numberwithin{equation}{section}
\begin{document}
\title[Unit roots of the unit root $L$-functions of Kloosterman family]
{Unit roots of the unit root $L$-functions of Kloosterman family}
\begin{abstract}
As a consequence of Wan's theorem  about Dwork's conjecture, the unit root $L$-functions of the $n$-dimensional Kloosterman family are $p$-adic meromorphic.
   By studying the symmetric power $L$-functions associated to the Kloosterman family, we prove that for each $0\le j\le n$, the unit root $L$-function coming from slope $j$ has a unique unit root which equals 1.

\end{abstract}

\author[L.P. Yang]{Liping Yang}
\address{College of Mathematics and Physics, Chengdu University of Technology, Chengdu 610059, P.R. China}
\email{yanglp2013@126.com}
\author[H.Zhang]{Hao Zhang$^*$}
\address{Shing-Tung Yau Center of Southeast University, Yifu Architecture Building, No.2, Sipailou, Nanjing 210096, P.R. China}
\email{zhanhgao@126.com}

\thanks{$^*$ H. Zhang is the corresponding author and supported by National Natural Science Foundation of China (Grant No. 12171261).
L.P. Yang is supported by National Natural Science Foundation of China (Grant No. 12201078).
}
\keywords{Unit root, $L$-function, Kloosterman sum.}
\subjclass[2010]{Primary 11T23, 11S40. }
\maketitle

\section{Introduction}

Let $p$ be a prime number and  $\mathbb{F}_q$ be the finite field of $q=p^a$ elements with characteristic $p$.
Fix a nonzero element $\bar{\lambda}$ in the algebraic closure of $\mathbb{F}_q$, denoted by $\bar{\mathbb{F}}_q$. Write $\deg(\bar{\lambda})=[\mathbb{F}_q(\bar{\lambda}):\mathbb{F}_q]$.
Let  $\zeta_p$ be a fixed primitive $p$-th root of unity.  For positive integers $m$ and $n$, the $n$-dimensional Kloosterman sum is defined by 
$${\rm Kl}_{m}(\bar{\lambda},n):=\sum_{\mathbf x\in (\mathbb F_{q_{\bar\lambda}^m}^\ast)^n} \psi\circ \text{Tr}_{\mathbb F_{q_{\bar\lambda}^m}/\mathbb F_q}(x_1+\cdots+x_n+\frac{\bar{\lambda}}{x_1\cdots x_n}),$$
where $ q_{\bar\lambda}:=q^{\text{deg}(\bar\lambda)}, $ and $\psi$ is a non-trivial additive character on $\mathbb F_q$ defined by
$$\psi(\bar a)=\zeta_p^{\text{Tr}_{\mathbb F_q/\mathbb F_p}(\bar a)}$$
for $\bar a\in \mathbb F_q$. 
Then we define the associated $L$-function by
$$ L(\bar{\lambda},n,T):=\exp \Big(\sum_{m=1}^{\infty}\frac{ {\rm Kl}_{m}(\bar{\lambda},n)T^m}{m} \Big). $$
It is well known that the $L$-function $L(\bar{\lambda},n,T)^{(-1)^{n+1}}$ is a polynomial of degree $n+1$ (\cite{AS1}).

By Sperber \cite{SP1}, there are $n+1$ algebraic integers
$\pi_0(\bar{\lambda}),\ldots, \pi_n(\bar{\lambda})$ satisfying $${\rm ord}_{q^{\deg(\bar{\lambda})}}\pi_j(\bar{\lambda})=j \text{  for } j=0,\cdots,n$$
such that
$$L(\bar{\lambda},n,T)^{(-1)^{n+1}}=(1-\pi_0(\bar{\lambda})T)\cdots (1-\pi_n(\bar{\lambda})T). $$ 
We call $j$ the the {\it slope} of $\pi_j(\bar{\lambda})$.

In section 2, we show that $q^{-j\deg(\bar{\lambda})}\pi_j(\bar{\lambda})$ is $p$-adic 1-unit (see Lemma \ref{lemma8} below), that is, $|q^{-j\deg(\bar{\lambda})}\pi_j(\bar{\lambda})- 1|_p<1$.
  Hence for $\kappa\in \mathbb{Z}_p$, we can define the unit root $L$-function coming from slope $j$ by
  $$L_{unit}(j,\kappa, T):=\prod_{\bar{\lambda}\in |\mathbb {G}_m/\mathbb{F}_q|}\frac{1}{1-(q^{-j\deg(\bar{\lambda})}\pi_j(\bar{\lambda}))^{\kappa}T^{\deg(\bar{\lambda})}},$$
where $\bar{\lambda}$ runs over all closed points of $\mathbb {G}_m/\mathbb{F}_q$.
 Wan's theorem \cite{Wan99} about Dwork's conjecture shows that the unit root $L$-function $L_{unit}(j,\kappa, T)$ is $p$-adic meromorphic. A natural question is to study the zeros and poles of the unit root $L$-function.

Dwork \cite{Dw74} studied the 1-dimensional Kloosterman sums using $p$-adic Bessel function. In this case, Robba \cite{Ro86} first studied its $k$-th symmetric power $L$-functions using $p$-adic method and proved it is a polynomial with a unique unit root $T=1$. Using Robba's result, Haessig \cite{H17} showed that $T=1$ is also the unique unit root of the unit root $L$-function coming from slope 0.

On the other hand, Adolphson and Sperber \cite{AS2} proved the $L$-function associated to arbitrary exponential sum on the torus has a unique unit root, which is $1$-unit. Then the unit root $L$-function coming from slope $0$ can be defined for any family. For the family  with a lower deformation condition, Haessig and Sperber \cite{HS17} proved that it also has a unique unit root and expressed the unit root in terms of certain hypergeometric functions. Their results can not apply to the Kloosterman family since it does not satisfy the lower deformation condition.

To extend Haessig's result \cite{H17}  to the $n$-dimensional Kloosterman sums, we prove that the unit root $L$-function coming from arbitrary slope has $1$ as its unique unit root. We have the following result.

\begin{thm}\label{thm1}
Let $p\ge n+3$ and $\kappa\in \mathbb{Z}_p$. For each integer $0\le j\le n$,  the unit root $L$-function $L_{unit}(j,\kappa, T)$ coming from slope $j$ has a unique unit root $T=1$.
\end{thm}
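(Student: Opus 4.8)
The plan is to follow the strategy pioneered by Robba and Haessig, extending it from the slope-$0$ case to all slopes $0 \le j \le n$ by working with symmetric power $L$-functions of the Kloosterman family. First I would recall the structure of Dwork's cohomological setup: the Frobenius $\alpha(\bar\lambda)$ acting on the relevant $(n+1)$-dimensional Dwork cohomology space has the roots $\pi_0(\bar\lambda),\dots,\pi_n(\bar\lambda)$ as eigenvalues (more precisely, Frobenius at a closed point is the $\deg(\bar\lambda)$-fold composition of a linear Frobenius $\alpha$ over the generic fiber), and the unit root part of the $\kappa$-th symmetric/tensor construction is what controls $L_{unit}(j,\kappa,T)$. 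The key identity to establish is that $L_{unit}(j,\kappa,T)$ can be extracted from a $p$-adic limit of $L$-functions attached to $\mathrm{Sym}^k$ (or appropriate wedge/tensor combinations picking out the slope-$j$ eigenline) of the Kloosterman sheaf, with $k \to \kappa$ $p$-adically, using Dwork's trace formula and the fact that $q^{-j\deg(\bar\lambda)}\pi_j(\bar\lambda)$ is a $1$-unit (Lemma \ref{lemma8}).

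Next, I would analyze these symmetric power $L$-functions directly. For the $1$-dimensional family, Robba showed $\mathrm{Sym}^k$ of the Kloosterman $L$-function is a polynomial whose only unit root is $T=1$; the mechanism is that the $p$-adic Bessel function provides an explicit horizontal section, and the unit root subspace of the symmetric power connection is spanned by a power of the unit-root section, so its ``constant term'' at $\bar\lambda$ is a $1$-unit that specializes to $1$ under the trivial (Teichmüller-trivial) twist. For general $n$, I would replace the Bessel function with the appropriate solution of the Dwork/Picard–Fuchs system for the $n$-dimensional Kloosterman family — the eigenvector of $\alpha$ of slope $j$ — and show that its normalization $q^{-j\deg}\pi_j$ behaves like a $1$-unit-valued ``character'' of the fundamental group, so that the Euler product $L_{unit}(j,\kappa,T) = \prod_{\bar\lambda} (1-(q^{-j\deg(\bar\lambda)}\pi_j(\bar\lambda))^\kappa T^{\deg(\bar\lambda)})^{-1}$ has a zero/pole structure dictated entirely by the trivial part. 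Concretely I would show the function has a pole at $T=1$ of order exactly $1$ and no other unit zeros or poles, by comparing with the $L$-function of the trivial rank-one unit-root sheaf on $\mathbb{G}_m$ and controlling the ``non-constant'' contributions via slope estimates (using $p \ge n+3$ to ensure the relevant Hodge/Newton polygon gaps, and that the symmetric powers stay in the convergent range of Dwork theory).

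The main obstacle I expect is \emph{isolating the slope-$j$ unit root piece} cleanly. For $j=0$ this is the honest unit-root subspace and Wan's/Dwork's machinery applies directly; for $j \ge 1$ one must divide by $q^{j}$ first, which is not a unit operation on cohomology, so $q^{-j\deg(\bar\lambda)}\pi_j(\bar\lambda)$ is a unit but does not a priori come from a unit-root $F$-crystal in the naive sense. The fix is to use the known (Sperber) Hodge–Newton decomposition of the Kloosterman crystal: each slope $j$ occurs with multiplicity one, so there is a rank-one sub/quotient $F$-crystal $U_j$ with Frobenius $\pi_j$, and $U_j \otimes (\text{unit-root twist})$ is genuinely unit-root after the Tate twist by $q^{-j}$; I would need to verify this twist is defined over the family (not just pointwise) and that Wan's theorem applies to its $\kappa$-th power. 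Once that is in place, the argument reduces to the slope-$0$ case for a modified family, and the uniqueness of the unit root $T=1$ follows as in Haessig's argument, with the value $1$ coming from the fact that at the ``boundary'' the symmetric-power section degenerates to the constant function $1$. A secondary technical point is justifying the $p$-adic continuation in $\kappa$: I would establish it first for $\kappa = k$ a positive integer via the explicit symmetric-power $L$-function computation, then extend to all $\kappa \in \mathbb{Z}_p$ by the $p$-adic continuity built into Wan's theory, checking that the unit root $T=1$ and its multiplicity are locally constant in $\kappa$.
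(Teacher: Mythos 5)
Your overall skeleton (write $L_{unit}(j,\kappa,T)$ as a $p$-adic limit of symmetric-power $L$-functions of the family and analyze the unit part) matches the paper's, but the way you isolate the slope-$j$ piece has a real hole. You propose a family-level rank-one sub/quotient $F$-crystal $U_j$ of slope $j$ (Hodge--Newton), Tate-twisted by $q^{-j}$, and you yourself flag that one "would need to verify this twist is defined over the family" --- but that verification is precisely the difficulty, and it is exactly what Wan's method is designed to bypass. The paper never constructs such a $U_j$: it works with the normalized exterior powers $\phi_{a,j}$ and $\phi_{a,j+1}$ (the factor $p^{-mi(i-1)/2}$ in (\ref{eqnphi}) makes their unit roots $1$-units) and invokes Wan's limiting formula (Lemma \ref{lemma6}), $L_{unit}(j,\kappa,T)=\lim_{s}L(\phi_{a,j}^{k_s'}\otimes\phi_{a,j+1}^{k_s},T)$ with $k_s'\to-\kappa$, $k_s\to\kappa$ $p$-adically, because the slope-$j$ $1$-unit $\beta_j(\bar\lambda)$ is the ratio of the unit roots of $\wedge^{j+1}$ and $\wedge^{j}$. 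Without either your unproved family-level decomposition or this device, the case $j\ge 1$ does not get started.

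More seriously, even granting that reduction, your sketch contains no proof of the two substantive claims. "Uniqueness of the unit root $T=1$ follows as in Haessig's argument" is not available: Haessig's $n=1$ proof rests on Robba's theorem that the symmetric-power $L$-functions of the Bessel equation are polynomials with unique unit root $1$, and no analogue exists for the $n$-dimensional family --- supplying a substitute is the actual content of the paper. Uniqueness is proved there by showing the infinite symmetric-power Frobenius matrix $B^{(j,\infty,\kappa)}$ is a rank-one idempotent mod $\pi$, so that $\det(1-TF_{B^{(j,\infty,\kappa)}})\equiv 1-T \bmod \pi$ (Theorem \ref{thm2}); and the value $1$ is obtained by a separate dual-module argument (Section 4): by Corollary \ref{coro} the adjoint operator $\beta^{*(j,k_s',k_s)}$ fixes the explicit vector $(e_{\vec{u}_0^{(j)},\Lambda}^{*})^{k_s'}\otimes(e_{\vec{u}_0^{(j+1)},\Lambda}^{*})^{k_s}$ exactly (not merely mod $\pi$), so $T=1$ is a root of $\det(1-\beta^{(j,k_s',k_s)}T)$, and this persists in the limit $s\to\infty$. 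Your remark that "at the boundary the symmetric-power section degenerates to the constant function $1$" gestures at the right phenomenon ($\mathcal{A}_1(0)$ is triangular with diagonal $1,p,\dots,p^n$), but it is not an argument, and nothing in your proposal excludes other zeros or poles in the closed unit disc. Finally, note a sign slip: you aim to exhibit a \emph{pole} of order one at $T=1$, whereas the meromorphic continuation has the shape $\det(1-\beta^{(j,\infty,\kappa)}T)/\det(1-q\beta^{(j,\infty,\kappa)}T)$ and the theorem asserts a unit \emph{zero} at $T=1$.
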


This paper is organized as follows. In section 2, we introduce the Dwork's cohomology to study the Kloosterman sums. In section 3, we investigate $L_{unit}(j,\kappa, T)$ by studying certain infinity symmetric power $L$-functions and show that $L_{unit}(j,\kappa, T)$ has a unique unit root. In section 4, we prove the unique unit root is $1$ by using dual theory.

\section{Dwork's cohomology}

For $p\geq 3$, fix an algebraic closure $\overline{\mathbb Q}_p$ of $\mathbb Q_p$, all finite extensions of $\mathbb Q_p$ are taken in $\overline{\mathbb Q}_p$ throughout the paper. For $q=p^a$, let $\mathbb{Q}_q$ be the unramified extension of $\mathbb{Q}_p$ of degree $a$ and $\mathbb{Z}_q$ be its ring of integers. Denote by $\text{ord}_p$ the valuation on $\overline{\mathbb Q}_p$ normalized by $\text{ord}_pp=1$.
 In the following, we use the multi-index notation. For $\mathbf{v}=(v_1,\cdots,v_n)\in \mathbb Z^n$, $\mathbf{x}=(x_1,\cdots,x_n)$, write  $\mathbf{x}^{\mathbf{v}}=x_1^{v_1}\cdots x_n^{v_n}$.
Let $\pi \in \overline{\mathbb Q}_p$ be an element satisfying $\pi^{p-1}=-p$, $\Omega=\mathbb Q_q(\pi)$, and let $R$ be the ring of integers of $\Omega$.

For $m\in \mathbb{Z}_{\ge 0}$, we define
 $$\mathcal{O}_{0,p^m}:=\{\sum_{r=0}^{\infty} C(r)\pi^{\frac{(n+1)r}{p^m}}\Lambda^r : C(r)\in R, C(r)\rightarrow 0\ {\rm as}\  r\rightarrow \infty \}.$$
Then $\mathcal{O}_{0,p^m}$ is a Banach space with respect to the norm defined by
 $$|\sum_{r=0}^{\infty}C(r)\pi^{\frac{(n+1)r}{p^m}}\Lambda^r|=\sup_{r\ge 0}|C(r)|. $$

 Let $s(\mathbf{v})=\max \{0,-v_1,\cdots,-v_n \}$ and  $w(\mathbf{v})=\sum_{i=1}^n v_i+(n+1)s(v)$.
 Define an $\mathcal{O}_{0,p^m}$-algebra
 $$\mathcal{C}_{0,p^m}:=\{\sum_{\mathbf{v}\in \mathbb{Z}^n}\zeta({\mathbf v}) \pi^{w({\mathbf v})}\Lambda^{p^m s(\mathbf{v}) }\mathbf{x}^{\mathbf{v}}: \zeta(\mathbf{v})\in \mathcal{O}_{0,p^m},|\zeta(\mathbf{v})|\rightarrow 0\ {\rm as}\ w(\mathbf{v})\rightarrow \infty \}. $$
It is also a Banach algebra with respect to norm
 $$|\sum_{\mathbf{v}\in \mathbb{Z}^n}\zeta(\mathbf{v}) \pi^{w(\mathbf{v})}\Lambda^{p^m s({\mathbf v}) }\mathbf{x}^{\mathbf{v}}|:=\sup_{{\mathbf v}\in \mathbb{Z}^n}\{ |\zeta(\mathbf{v})|\} .$$

When $m=0$, we simply write $\mathcal{O}_{0,1}$ and $\mathcal{C}_{0,1}$ by $\mathcal{O}_0$ and $\mathcal{C}_0$ respectively. Let $f(\Lambda,\mathbf{x})=x_1+\cdots+x_n+\frac{\Lambda}{x_1\cdots x_n}$. For $i=1,\cdots,n$, we define the following operators acting on $\mathcal{C}_{0,p^m}$:
\begin{eqnarray*}D_{i,\Lambda^{p^m}}:&=&x_i\frac{\partial}{\partial x_i}+\pi (x_i-\frac{\Lambda^{p^m}}{x_1\cdots x_n})\\
&=&\exp\big(-\pi f(\Lambda^{p^m},\mathbf{x})\big)\circ x_i \frac{\partial}{\partial x_i}\circ \exp\big(\pi f(\Lambda^{p^m},\mathbf{x})\big);\\
\nabla_{\Lambda^{p^m}}:&=&\Lambda \frac{\partial}{\partial \Lambda}+\frac{p^m \pi  \Lambda^{p^m}}{x_1\cdots x_n}\\
 &=&\exp\big(-\pi f(\Lambda^{p^m},\mathbf{x})\big)\circ \Lambda \frac{\partial}{\partial \Lambda}\circ \big(\exp\pi f(\Lambda^{p^m},\mathbf{x})\big).
\end{eqnarray*}

Note that $\frac{\partial}{\partial x_i}$ commutes with $\frac{\partial}{\partial \Lambda}$, then $D_{i,\Lambda^{p^m}}$ commutes with $\nabla_{\Lambda^{p^m}}$. One checks that 
\begin{equation}\label{basis}
\nabla_{\Lambda^{p^m}}^j(1)\equiv \pi^j x_1\cdots x_j \mod \sum_{i=1}^nD_{i,\Lambda^{p^m}}\mathcal{C}_{0,p^m}.
\end{equation}
Using the same argument as  \cite[Theorem 3.2]{HS172}, we have that the quotient
$$\mathcal{H}_{\Lambda^{p^m}}=\mathcal{C}_{0,p^m}/\sum_{i=1}^nD_{i,\Lambda^{p^m}}\mathcal{C}_{0,p^m}$$
is a free $\mathcal{O}_{0,p^m}$-module of rank $n+1$ with basis $\{1, \bar{\nabla}_{\Lambda^{p^m}}(1),\cdots,\bar{\nabla}_{\Lambda^{p^m}}^n(1)\}$.
In the following, we fix the basis $\{1, \bar{\nabla}_{\Lambda^{p^m}}(1),\cdots,\bar{\nabla}_{\Lambda^{p^m}}^n(1)\}$
 and let $e_0=1$ and $e_i=\bar{\nabla}_{\Lambda^{p^m}}^i(1)$ for $i=1,\cdots,n$. From (\ref{basis}) we see that the basis $\{e_i\}$ does not rely on $m$.

\subsection{The Frobenius map}

For $m\in \mathbb{Z}_{\ge 0}$ and $b,c\in \mathbb{R}$, $b>0$, we define the space
 $$K_{p^m}(b;c):=\{\sum_{r\ge p^m\cdot s(\mathbf{v})} a(r,\mathbf{v})\Lambda^r\mathbf{x}^{\mathbf{v}}: a(r,\mathbf{v})\in \Omega, {\rm ord}_pa(r,\mathbf{v})\ge bw_{p^m}(r,\mathbf{v})+c \}, $$
where $w_{p^m}(r,\mathbf{v}):=\sum_{i=1}^nv_i+\frac{(n+1)r}{p^m} $. One checks that
$$\mathcal{C}_{0,p^m}\subset K_{p^m}(\frac{1}{p-1};0),\ \  K_{p^m}(b,0)\subset C_{0,p^m} \text{ for } b>\frac{1}{p-1}.$$
Let $$K_{p^m}(b):=\cup_{c\in \mathbb{R}} K_{p^m}(b;c).$$
When $m=0$ we use $K(b;c)$ and $K(b)$ to denote $K_{1}(b;c)$ and $K_{1}(b)$, respectively .

 Let $\Omega[[\Lambda]]$ be the power series ring over $\Omega$. The coefficient rings of $K_{p^m}(b;c)$ and $K_{p^m}(b)$ are defined by
$$L_{p^m}(b;c):=K_{p^m}(b;c)\cap \Omega[[\Lambda]]=\Big\{\sum_{r\ge 0}a(r)\Lambda^r: a(r)\in \Omega, {\rm ord}_pa(r)\ge \frac{b(n+1)r}{p^m}+c \Big\} $$
and
$$L_{p^m}(b):=\cup_{c\in \mathbb{R}}L_{p^m}(b;c) .$$
If $m=0$, then we simply write $L_{1}(b;c)$ and $L_{1}(b)$ by $L(b;c)$ and $L(b)$, respectively.

Define $\psi_{\mathbf x}$ acting on the power series ring involving $\Lambda$ and $\mathbf x$ by
$$\psi_{\mathbf x}\big(\sum a(r, \mathbf v)\Lambda^r\mathbf x^{\mathbf v}\big)=\sum a(r, p\mathbf v)\Lambda^r\mathbf x^{\mathbf v}.$$
One checks that $\psi_{\mathbf x}\big(K_{p^m}(b;0)\big)\subset K_{p^{m+1}}(pb;0)$ for $m\geq 0$.
Let $\theta(t):=\exp \pi(t-t^p)$ be a Dwork splitting function. Write $\theta(t)=\sum_{j\ge 0}\theta_j t^j$, then  $\text{ord}_p(\theta_j)\ge \frac{p-1}{p^2}j.$ It follows from the estimates that for $m\ge 1$,
\begin{eqnarray*}
F(\Lambda,\mathbf x)&:=&\theta(x_1)\cdots \theta(x_n)\theta(\frac{\Lambda}{x_1\cdots x_n})\in K((p-1)/p^2;0),\\
F_m(\Lambda,\mathbf x)&:=&\prod^{m-1}_{i=0}F(\Lambda^{p^i}, \mathbf x^{p^i})\in K((p-1)/p^{m+1};0).
\end{eqnarray*}
For $m\ge1$, define
$$\alpha_m=\psi_{\mathbf x}^m\circ F_m(\Lambda,\mathbf x)=\exp\big(-\pi f(\Lambda^{p^m},\mathbf x)\big)\circ\psi_{\mathbf x}^m\circ \exp\big(\pi f(\Lambda,\mathbf x)\big).$$
Note that $\frac{p-1}{p}>\frac{1}{p-1}>\frac{p-1}{p^2}$ for $p>2$. Then $\alpha_m$ is a map from
$\mathcal{C}_0$ to $\mathcal{C}_{0,p^m}$. It is indeed the composition
$$\mathcal{C}_0\subset K(\frac{1}{p-1};0)\hookrightarrow K(\frac{p-1}{p^{m+1}};0)\stackrel{F_{m}}{\longrightarrow} K(\frac{p-1}{p^{m+1}};0)
\stackrel{\psi_{\mathbf {x}}^m}{\longrightarrow} K_{p^m}(\frac{p-1}{p};0)\subset \mathcal{C}_{0,p^m}.$$

For $i=1,\cdots,n$, one verifies from the definition that
$$\alpha_m\circ D_{i,\Lambda}=p^mD_{i,\Lambda^{p^m}}\circ\alpha_m.$$
Hence $\alpha_m$ induces a map from $\mathcal{H}_\Lambda$ to $\mathcal{H}_{\Lambda^{p^m}}$, which is denoted by $\bar\alpha_m$.

Let $\mathcal A_1(\Lambda)=(a_{ij}\big(\Lambda)\big)_{0\le i,j\le n}$ be the matrix of $\bar\alpha_1:\mathcal{H}_\Lambda\to \mathcal{H}_{\Lambda^{p}}$ with respect to the fixed basis, that is
 $$\bar\alpha_1\big(e_0,\cdots,e_{n}\big)=\big(e_0,\cdots,e_{n}\big)\mathcal A_1(\Lambda).$$ 
From the definition, one checks that $\alpha_m$ is represented by 
\begin{equation}\label{eqn005}
\mathcal A_m(\Lambda):=\mathcal A_1(\Lambda^{p^{m-1}})\cdots\mathcal A_1(\Lambda^p)\mathcal A_1(\Lambda)
\end{equation}
 relative to the fixed basis.
It follows from \cite[Theorem 1.3.9]{SP0} and \cite[Proposition 2.5.9]{SP0} that
\begin{equation}\label{eqnaij}
a_{ij}(\Lambda)\in L((p-1)/p^2;i).
\end{equation}
 Then  we apply  \cite[Theorem 4.2.11]{SP0} and \cite[Proposition 2.1]{SP1} to get the following description of the entries of $\mathcal A_1(\Lambda)$.

 \begin{lem}\label{lemma3}
 If $p\ge n+3$, then the constant terms of Frobenius matrix $\mathcal{A}_1(\Lambda)$ satisfy that
\begin{align}\label{eqn1}
&\left\{
  \begin{array}{ll}
a_{ij}(0)=0, &  \hbox{for  $j>i$};\\
 a_{ii}(0)=p^i, & \hbox{for $0\le i\le n$}.
\end{array}
\right.
\end{align}
Furthermore, for  $\lambda\in \bar{\mathbb{Q}}_p$ such that ${\rm ord}_p\lambda\ge 0$, we have
\begin{align}\label{eqn2}
&\left\{
  \begin{array}{ll}
a_{ii}(\lambda)/p^{i} \equiv 1 \mod \pi, &  \hbox{for $i=0,1,\ldots,n$};\\
  a_{ij}(\lambda)/p^{i}\equiv 0 \mod \pi, &  \hbox{for $j>i$}.
  \end{array}
\right.
\end{align}
and
 \begin{align}\label{eqn3}
 {\rm ord}_p a_{ij}(\lambda)\ge i.
 \end{align}
 \end{lem}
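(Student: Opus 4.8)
The plan is to treat the three displayed parts \eqref{eqn1}, \eqref{eqn2}, \eqref{eqn3} separately: the first (the constant terms) is where I would invoke Sperber's explicit analysis of the hyper-Kloosterman Frobenius, and the other two are then elementary consequences of \eqref{eqnaij}---and, for \eqref{eqn2}, of \eqref{eqn1}. Indeed \eqref{eqn3} is immediate: writing $a_{ij}(\Lambda)=\sum_{r\ge 0}a_{ij}(r)\Lambda^r$, the membership $a_{ij}\in L((p-1)/p^2;i)$ of \eqref{eqnaij} gives $\mathrm{ord}_p a_{ij}(r)\ge\frac{(p-1)(n+1)r}{p^2}+i$, so whenever $\mathrm{ord}_p\lambda\ge 0$ every monomial of $a_{ij}(\lambda)$ has $p$-adic order at least $i$, hence $\mathrm{ord}_p a_{ij}(\lambda)\ge i$.

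For \eqref{eqn1} I would observe that $\mathcal A_1(0)$ is the matrix of the Frobenius at the singular fiber of the family: putting $\Lambda=0$ kills the factor $\theta(\Lambda/(x_1\cdots x_n))$ in $F_1$, so the Laurent polynomial degenerates to $x_1+\cdots+x_n$, and $\bar\alpha_1$ is read off in the fixed basis $\{e_i\}$. Since $e_i\equiv\pi^i x_1\cdots x_i$ by \eqref{basis}, this basis is adapted to the filtration by number of variables, so one expects $\mathcal A_1(0)$ to be lower triangular with its $i$-th graded piece scaled by $p^i$; the exact statement---$a_{ij}(0)=0$ for $j>i$ and $a_{ii}(0)=p^i$---is what \cite[Theorem 4.2.11]{SP0} together with \cite[Proposition 2.1]{SP1} supply. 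So at this step I would simply cite those results, after matching their normalizations (of $\pi$, of the splitting function $\theta$, and of the cohomology basis) with the present ones. The hypothesis $p\ge n+3$ enters here: it is the range in which the congruence $e_i\equiv\pi^i x_1\cdots x_i$, the freeness of $\mathcal H_\Lambda$, and Sperber's structural theorem are all valid.

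Granting \eqref{eqn1}, the congruence \eqref{eqn2} follows from the same growth estimate after dividing by $p^i$. Write $a_{ij}(\lambda)/p^i=a_{ij}(0)/p^i+\sum_{r\ge 1}\bigl(a_{ij}(r)/p^i\bigr)\lambda^r$. By \eqref{eqnaij} and $\mathrm{ord}_p\lambda\ge 0$, each tail term has $p$-adic order at least $\frac{(p-1)(n+1)r}{p^2}\ge\frac{(p-1)(n+1)}{p^2}\ge\frac1{p-1}=\mathrm{ord}_p\pi$, the last inequality being $(n+1)(p-1)^2\ge p^2$, which holds for every $n\ge 1$ exactly because $p\ge n+3$. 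Hence the entire tail is $\equiv 0\pmod{\pi}$, and \eqref{eqn2} reduces to \eqref{eqn1}: $a_{ii}(\lambda)/p^i\equiv 1$ and $a_{ij}(\lambda)/p^i\equiv 0\pmod{\pi}$ for $j>i$.

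The main obstacle is thus concentrated in \eqref{eqn1}. A short self-contained computation of the $a_{ij}(0)$ seems out of reach: reducing $\alpha_1(\pi^j x_1\cdots x_j)$ modulo $\sum_{i=1}^n D_{i,\Lambda^p}\mathcal C_{0,p}$ mixes many monomials---positive powers of the $x_k$ get traded, via the full operators $D_{i,\Lambda^p}$, for terms involving $\Lambda$ and negative exponents---and it is Sperber's detailed bookkeeping that collapses this to the clean triangular form with diagonal $p^i$. Consequently the delicate part of writing up the argument is to check carefully that \cite[Theorem 4.2.11]{SP0} and \cite[Proposition 2.1]{SP1} are formulated in a normalization compatible with ours, so that they apply verbatim.
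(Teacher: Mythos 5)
Your proposal is correct and follows essentially the same route as the paper, which gives no independent argument for \eqref{eqn1} beyond citing \cite[Theorem 4.2.11]{SP0} and \cite[Proposition 2.1]{SP1}, exactly as you do. Your elementary deductions of \eqref{eqn2} and \eqref{eqn3} from \eqref{eqnaij} together with \eqref{eqn1} (using ${\rm ord}_p\lambda\ge 0$ and the estimate $\frac{(p-1)(n+1)}{p^2}\ge\frac{1}{p-1}$, valid under $p\ge n+3$) are sound and simply make explicit what the paper leaves implicit.
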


\subsection{Kloosterman sums and $L$-functions}
Recall that for each positive integer $m$ and $\bar\lambda\in \bar{\mathbb F}_q^\ast$, the Kloosterman sums ${\rm Kl}_{m}(\bar{\lambda},n)$ is
$${\rm Kl}_{m}(\bar{\lambda},n)=\sum_{\mathbf x\in (\mathbb F_{q_{\bar\lambda}^m}^\ast)^n} \psi\circ \text{Tr}_{\mathbb F_{q_{\bar\lambda}^m}/\mathbb F_q}(f(\bar{\lambda},\mathbf x)).$$
Let $$L(f(\bar\lambda,\mathbf x), T):=\exp\big(\sum_{m=1}^\infty \frac{{\rm Kl}_{m}(\bar{\lambda},n) T^m}{m}\big).$$
be the $L$-function for the Kloosterman sums.

Let $\lambda$ be the Techm\"uller lifting of $\bar\lambda$ in $\overline{\mathbb Q}_p$.
We regard the field extension $\Omega(\lambda)$ as an $\mathcal{O}_0$-algebra via the homomorphism
$$\mathcal{O}_0\to \Omega(\lambda),\ \ \ \ \Lambda\to \lambda.$$
Note that $\alpha_{a\text{deg}(\bar\lambda),\lambda}:=\psi_{\mathbf{x}}^{a\text{deg}(\bar{\lambda})}\circ F_{a\text{deg}(\bar{\lambda})}(\lambda,\mathbf{x})$ induces a map denoted by $\bar\alpha_{a\text{deg}(\bar\lambda),\lambda}$ on $\mathcal{H}_{\Lambda}\otimes_{\mathcal{O}_0}\Omega(\lambda)$ since $\lambda^{q^{\text{deg}(\bar\lambda)}}=\lambda$. By Dwork's trace formula \cite[Theorem 2.2]{AS1} and \cite[Lemma 2.8]{Fu}, we have
\begin{equation}
{\rm Kl}_{m}(\bar{\lambda},n)=(-1)^n\text{Tr}\big(\bar\alpha^m_{a\text{deg}(\bar\lambda),\lambda}|_{\mathcal{H}_{\Lambda}\otimes_{\mathcal{O}_0}\Omega(\lambda)}\big).
\end{equation}
In other words, we have
\begin{equation}
L(f(\bar\lambda,\mathbf x), T)^{(-1)^{n+1}}=\text{det}\big(1-T\bar\alpha_{a\text{deg}(\bar\lambda), \lambda}|_{\mathcal{H}_{\Lambda}\otimes_{\mathcal{O}_0}\Omega(\lambda)}\big)=\det\big(1-T\mathcal A_{a\text{deg}(\bar\lambda)}(\lambda)\big).
\end{equation}
By \cite[Theorem 3.10]{AS1} and \cite[Theorem 2.35]{SP1}, $L(f(\bar\lambda,\mathbf x), T)^{(-1)^{n+1}}$ is a polynomial of degree $n+1$, and can be written by
$$L\big(f(\bar\lambda,\mathbf x), T\big)^{(-1)^{n+1}}=\big(1-\pi_0(\bar\lambda )T\big)\cdots\big(1-\pi_n(\bar\lambda) T\big)$$
with ${\rm ord}_{q^{\deg(\bar{\lambda})}}\pi_j(\bar{\lambda})=j$ for $j=0,\cdots, n$.

 \subsection{Exterior power}
Let $i$ be an integer such that $1\le i \le n+1$.
For $i\ge 2$, the exterior power $\wedge^i \mathcal{H}_{\Lambda^{p^m}}$ is the
free $\mathcal{O}_{0,p^m}$-module with basis
$$\{ e_{u_1}\wedge \cdots \wedge e_{u_i}:0\le u_1<\cdots<u_i\le n\}.$$
Define a map $\phi_{m,i}: \wedge^i \mathcal{H}_{\Lambda}\rightarrow \wedge^i \mathcal{H}_{\Lambda^{p^m}}$ by
\begin{equation}\label{eqnphi}\phi_{m,i}(e_{u_1}\wedge\cdots \wedge e_{u_i})=p^{-mi(i-1)/2} \bar{\alpha}_{m}(e_{u_1} )\wedge \cdots\wedge \bar{\alpha}_{m}(e_{u_i} ).
\end{equation}
From (\ref{eqnaij}), it is well-defined. For $m=a\deg(\bar{\lambda})$, we denote by $\phi_{a\text{deg}(\bar{\lambda}),i,\lambda}$ the specialization of $\phi_{m,i}$  at $\lambda$ acting on $\wedge^i\mathcal{H}_{\Lambda}\otimes_{\mathcal{O}_0} \Omega(\lambda)$.

\begin{lem}\label{lemma1}
View $\{ e_{u_1}\wedge \cdots \wedge e_{u_i}:0\le u_1<\cdots<u_i\le n\}$ as the basis of $\wedge^i \mathcal{H}_{\Lambda}\otimes_{\mathcal{O}_0} \Omega(\lambda)$,
we have that $\phi_{a\text{deg}{(\bar{\lambda})},i,\lambda}(e_{0}\wedge \cdots \wedge e_{i-1})\equiv e_{0}\wedge \cdots \wedge e_{i-1} \mod \pi$ and
$\phi_{a\text{deg}{(\bar{\lambda})},i,\lambda}(e_{v_1}\wedge \cdots \wedge e_{v_i})\equiv 0 \mod \pi$ otherwise.
\end{lem}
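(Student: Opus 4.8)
The plan is to deduce the statement directly from Lemma \ref{lemma3} together with the defining formula \eqref{eqnphi} for $\phi_{m,i}$, working modulo $\pi$ in the exterior power. First I would recall that $\phi_{a\deg(\bar\lambda),i,\lambda}$ is, up to the normalizing factor $p^{-mi(i-1)/2}$ with $m=a\deg(\bar\lambda)$, just the $i$-th exterior power of $\bar\alpha_{m,\lambda}$, which is represented on $\mathcal H_\Lambda\otimes_{\mathcal O_0}\Omega(\lambda)$ by the matrix $\mathcal A_m(\lambda)=\mathcal A_1(\lambda^{p^{m-1}})\cdots\mathcal A_1(\lambda)$ from \eqref{eqn005}. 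Since $\lambda$ is a Teichmüller lift, $\mathrm{ord}_p\lambda\ge 0$, so each factor $\mathcal A_1(\lambda^{p^k})$ obeys the congruences and valuation bounds of Lemma \ref{lemma3}: its $(i,j)$ entry has $\mathrm{ord}_p\ge i$, the diagonal entry $a_{ii}$ satisfies $a_{ii}/p^i\equiv 1\bmod\pi$, and the strictly-upper-triangular entries satisfy $a_{ij}/p^i\equiv 0\bmod\pi$ for $j>i$.

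Next I would combine these $m$ factors. The key computation is that the product of $m$ matrices, each of which is "block lower triangular modulo $\pi$ after scaling row $i$ by $p^{-i}$," is again of this shape: writing $\mathcal A_1(\lambda^{p^k})=\mathrm{diag}(1,p,\dots,p^n)\,B_k$, the bound $\mathrm{ord}_p a_{ij}\ge i$ forces each $B_k$ to have $p$-integral entries, and the diagonal/upper-triangular congruences say $B_k\equiv L_k\bmod\pi$ with $L_k$ lower triangular and unipotent on the diagonal. Hence
\[
\mathcal A_m(\lambda)=\mathrm{diag}(1,p,\dots,p^n)\,B\qquad\text{with }B\equiv L\bmod\pi,
\]
where $L$ is lower triangular with $1$'s on the diagonal; here I should be a little careful to track how the scaling matrices $\mathrm{diag}(1,p,\dots,p^n)$ interleave between consecutive $B_k$'s, but since conjugating a $p$-integral lower-triangular-mod-$\pi$ matrix by $\mathrm{diag}(1,p,\dots,p^n)$ only multiplies the $(i,j)$ entry by $p^{i-j}$, which is $p$-integral for $i\ge j$, the lower-triangular-mod-$\pi$ structure is preserved throughout the product. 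Taking $i$-th exterior powers, the entry of $\wedge^i(\mathcal A_m(\lambda))$ in the row indexed by $\{0,\dots,i-1\}$ and column indexed by $\{u_1<\dots<u_i\}$ is the corresponding $i\times i$ minor of $\mathcal A_m(\lambda)$; the overall power of $p$ pulled out of rows $0,\dots,i-1$ is exactly $p^{0+1+\dots+(i-1)}=p^{mi(i-1)/2}$ when $m=1$, and $p^{mi(i-1)/2}$ in general after multiplying the $m$ factors, which is precisely cancelled by the normalization $p^{-mi(i-1)/2}$ in \eqref{eqnphi}.

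Finally I would read off the two assertions. For the column $\{0,1,\dots,i-1\}$, the relevant minor of $B$ is the $i\times i$ principal minor of a unipotent lower-triangular-mod-$\pi$ matrix, hence $\equiv 1\bmod\pi$, giving $\phi_{a\deg(\bar\lambda),i,\lambda}(e_0\wedge\cdots\wedge e_{i-1})\equiv e_0\wedge\cdots\wedge e_{i-1}\bmod\pi$. For any other column $\{v_1<\dots<v_i\}\ne\{0,\dots,i-1\}$ there is some index $t$ with $v_t>t-1$, i.e. the column set is not an initial segment, so the corresponding minor of a lower-triangular matrix involves an entry strictly above the diagonal and vanishes modulo $\pi$ — more precisely, any $i\times i$ minor of $L$ using rows $\{0,\dots,i-1\}$ and columns not equal to $\{0,\dots,i-1\}$ is $0$ because a lower triangular matrix has $\det$ of such a submatrix equal to $0$. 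Hence all other basis coefficients of $\phi_{a\deg(\bar\lambda),i,\lambda}(e_{v_1}\wedge\cdots\wedge e_{v_i})$ vanish mod $\pi$. The main obstacle is the bookkeeping in the middle step: making sure the interleaved scaling matrices $\mathrm{diag}(1,p,\dots,p^n)$ do not destroy $p$-integrality when multiplying the $m$ Frobenius factors, and confirming that the accumulated power of $p$ matches $p^{mi(i-1)/2}$ exactly so that the normalization in $\phi_{m,i}$ produces a genuinely $p$-integral, well-defined reduction mod $\pi$; once that is pinned down, the exterior-power minor argument is purely formal.
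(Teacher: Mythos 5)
Your overall strategy---pass to the Frobenius matrix, use Lemma \ref{lemma3}, and read the claim off from $i\times i$ minors---is the same as the paper's, but the middle step that you yourself flag as the main obstacle is a genuine gap, not just bookkeeping. Writing $D=\mathrm{diag}(1,p,\dots,p^n)$ and $\mathcal A_1(\lambda^{p^k})=D\,B_k$ as you do, the product $\mathcal A_m(\lambda)=(DB_{m-1})(DB_{m-2})\cdots(DB_0)$ (with $m=a\deg(\bar\lambda)$) is \emph{not} of the form $D\,B$ with $B$ unipotent lower triangular mod $\pi$ once $m\ge 2$: since $D\equiv\mathrm{diag}(1,0,\dots,0)\bmod\pi$, the matrix $D^{-1}\mathcal A_m(\lambda)\equiv L_{m-1}\,\mathrm{diag}(1,0,\dots,0)\,L_{m-2}\cdots$ has diagonal $(1,0,\dots,0)$ mod $\pi$, not $(1,1,\dots,1)$. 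Correspondingly, pulling $D$ out of rows $0,\dots,i-1$ only produces $p^{i(i-1)/2}$, not the $p^{m i(i-1)/2}$ needed to cancel the normalization in (\ref{eqnphi}), so your final "principal minor $\equiv 1$" step does not go through. The conjugation remark cannot repair this: to extract $D^m$ you would need factors like $D^{-t}B_kD^{t}$ (or $D^{t}B_kD^{-t}$), and in one orientation the below-diagonal entries get multiplied by negative powers of $p$ while Lemma \ref{lemma3} only gives ${\rm ord}_p a_{ij}\ge i$ there, and in the other orientation the above-diagonal entries get divided by $p^{j-i}$ while they are only known to be $\equiv 0\bmod\pi$; in neither case is integrality, hence reduction mod $\pi$, guaranteed.

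The paper avoids the interleaving problem by using multiplicativity of the exterior power: $\phi_{a\deg(\bar\lambda),i,\lambda}$ is represented by the product of the single-step matrices $p^{-i(i-1)/2}\wedge^i\mathcal A_1(\lambda^{p^k})$, so it suffices to treat one factor, and for a single factor your minor computation is exactly right and is essentially the paper's (terse) verification: by (\ref{eqn3}) every term of a minor with rows $u_1<\cdots<u_i$ has ${\rm ord}_p\ge u_1+\cdots+u_i$, so $p^{-i(i-1)/2}\wedge^i\mathcal A_1(\lambda^{p^k})$ is integral and every row other than $\vec u_0^{(i)}=(0,\dots,i-1)$ vanishes mod $\pi$, while the row $\vec u_0^{(i)}$ consists of minors of $(a_{uj}/p^{u})$, which by (\ref{eqn2}) is unipotent lower triangular mod $\pi$, giving $1$ in column $\vec u_0^{(i)}$ and $0$ elsewhere. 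Each factor is therefore congruent mod $\pi$ to the idempotent matrix with a single $1$ in position $(\vec u_0^{(i)},\vec u_0^{(i)})$, and a product of integral matrices of this form is again of this form, which is the lemma. If you rearrange your argument to apply the decomposition and minor computation to each factor $\mathcal A_1(\lambda^{p^k})$ separately and only then multiply mod $\pi$, the proof is complete.
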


\begin{proof}

Fix the basis $\{e_0,\cdots,e_n\}$ of $\mathcal{H}_{\Lambda}\otimes_{\mathcal{O}_0} \Omega(\lambda)$. The endomorphism $\bar{\alpha}_{a\text{deg}(\bar\lambda), \lambda}$ is represented by the matrix $\mathcal A_1(\lambda^{p^{a\text{deg}(\bar\lambda)-1}})\cdots\mathcal A_1(\lambda^p)\mathcal A_1(\lambda)$. And $\phi_{a\text{deg}(\bar{\lambda}),i,\lambda}$ is represented by $p^{-a\text{deg}(\bar\lambda) i(i-1)/2}\big(\wedge^i\mathcal A_1(\lambda^{p^{a\text{deg}(\bar\lambda)-1}})\big)\cdots\big(\wedge^i \mathcal A_1(\lambda)\big).$ Hence it suffices to check the lemma when replacing  $\phi_{a\text{deg}(\bar{\lambda}),i,\lambda}$ by $p^{-i(i-1)/2}\big(\wedge^i \mathcal A_1(\lambda)\big)$. Using (\ref{eqn2}) and (\ref{eqn3}), one can check this by definition. 
\end{proof}

 The following lemma gives the property of  the root of the $L$
-function $L\big(f(\bar\lambda,\mathbf x), T\big)^{(-1)^{n+1}}$.

\begin{lem}\label{lemma8} For each integer $0\le j\le n$, let $\beta_j(\bar{\lambda})=q^{-j\deg(\bar{\lambda})}\pi_j(\bar{\lambda})$. Then $\beta_j(\bar{\lambda})$ is a $p$-adic 1-unit.
\end{lem}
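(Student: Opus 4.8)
The plan is to obtain Lemma~\ref{lemma8} from Lemma~\ref{lemma1} by working in the exterior powers $\wedge^i\mathcal H_\Lambda$ and comparing reverse characteristic polynomials modulo $\pi$. Fix a closed point $\bar\lambda$ and put $m=a\deg(\bar\lambda)$. As recorded in the proof of Lemma~\ref{lemma1}, the operator $\phi_{m,i,\lambda}$ on $\wedge^i\mathcal H_\Lambda\otimes_{\mathcal O_0}\Omega(\lambda)$ is represented by $q^{-\deg(\bar\lambda)i(i-1)/2}\wedge^i\mathcal A_m(\lambda)$, and $\mathcal A_m(\lambda)$ has eigenvalues $\pi_0(\bar\lambda),\dots,\pi_n(\bar\lambda)$; hence the eigenvalues of $\phi_{m,i,\lambda}$ are exactly
$$\mu_S:=q^{-\deg(\bar\lambda)i(i-1)/2}\prod_{j\in S}\pi_j(\bar\lambda),\qquad S\subseteq\{0,1,\dots,n\},\ \, |S|=i,$$
so that $\det(1-T\phi_{m,i,\lambda})=\prod_{|S|=i}(1-\mu_S T)$. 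Since $\mathrm{ord}_{q^{\deg(\bar\lambda)}}\pi_j(\bar\lambda)=j$, we get $\mathrm{ord}_{q^{\deg(\bar\lambda)}}\mu_S=\sum_{j\in S}j-\tfrac{i(i-1)}{2}\ge 0$, with equality precisely for $S=\{0,1,\dots,i-1\}$. Thus among the $\binom{n+1}{i}$ eigenvalues of $\phi_{m,i,\lambda}$ exactly one, namely
$$u_i:=q^{-\deg(\bar\lambda)i(i-1)/2}\,\pi_0(\bar\lambda)\cdots\pi_{i-1}(\bar\lambda),$$
is a $p$-adic unit, while all the others lie in the maximal ideal.

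Next I would invoke Lemma~\ref{lemma1}: with respect to the basis $\{e_{u_1}\wedge\cdots\wedge e_{u_i}\}$, the matrix of $\phi_{m,i,\lambda}$ reduces modulo $\pi$ to the idempotent whose only nonzero entry is a $1$ in the diagonal slot indexed by $e_0\wedge\cdots\wedge e_{i-1}$. Consequently $\det(1-T\phi_{m,i,\lambda})\equiv 1-T\pmod{\pi}$. Comparing this with $\det(1-T\phi_{m,i,\lambda})=\prod_{|S|=i}(1-\mu_S T)$ and observing that every factor with $S\neq\{0,\dots,i-1\}$ is $\equiv 1\pmod{\mathfrak m}$, where $\mathfrak m$ is the maximal ideal of the ring of integers of a finite extension of $\Omega(\lambda)$ containing all the $\mu_S$, we may reduce both sides and match the linear coefficient in $T$, which forces $u_i\equiv 1\pmod{\mathfrak m}$. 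Hence $u_i$ is a $p$-adic $1$-unit for every $1\le i\le n+1$; set also $u_0:=1$, a $1$-unit trivially.

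It remains to telescope. Since $\tfrac{(j+1)j}{2}-\tfrac{j(j-1)}{2}=j$, for each $0\le j\le n$ one has
$$\frac{u_{j+1}}{u_j}=q^{-j\deg(\bar\lambda)}\pi_j(\bar\lambda)=\beta_j(\bar\lambda),$$
and because the $p$-adic $1$-units form a multiplicative group, a quotient of two of them is again a $1$-unit; thus $\beta_j(\bar\lambda)$ is a $1$-unit, as claimed. The point that needs care is the reduction step in the middle paragraph: one must be sure that the non-unit eigenvalues $\mu_S$ are genuinely integral—so that they reduce to $0$ and do not disturb the comparison of linear coefficients—and that the congruence of Lemma~\ref{lemma1}, stated over $R$, persists after base change to $\Omega(\lambda)$ and then to the splitting field of the characteristic polynomial; both hold, the first because $\sum_{j\in S}j\ge\tfrac{i(i-1)}{2}$ and the second because $\pi$ lies in every maximal ideal in sight.
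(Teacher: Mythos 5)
Your proposal is correct and takes essentially the same route as the paper: the paper's proof also uses Lemma \ref{lemma1} to see that the unique unit reciprocal characteristic root of $\phi_{a\deg(\bar\lambda),i,\lambda}$ is the $1$-unit $\beta_0(\bar\lambda)\cdots\beta_{i-1}(\bar\lambda)$, and then gets each $\beta_j(\bar\lambda)$ as a quotient of consecutive such products. Your write-up simply makes explicit the eigenvalue bookkeeping, integrality of the non-unit eigenvalues, and the mod-$\pi$ comparison that the paper leaves implicit.
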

\begin{proof}

We have that $\beta_0(\bar{\lambda})\cdots\beta_{j}(\bar{\lambda})$ is the reciprocal characteristic root of $\phi_{a\text{deg}(\bar{\lambda}),j,\lambda}$ which is a $p$-adic 1-unit for any $0\le j\le n$. Hence $\beta_j(\bar\lambda)$ is a $p$-adic 1-unit for all $0\le j\le n$.
 \end{proof}
 
Hence for $\kappa\in \mathbb{Z}_p$, one can define the
unit root $L$-function coming from slope $j$ by
$$L_{unit}(j,\kappa, T):=\prod_{\bar{\lambda}\in |\mathbb {G}_m/\mathbb{F}_q|}\frac{1}{1-(q^{-j\deg{(\bar{\lambda})}}\pi_j(\bar{\lambda}))^{\kappa}T^{\deg(\bar{\lambda})}}.$$

\section{Symmetric power $L$-functions}
\subsection{The symmetric power}

Let $\mathcal{I}_i$ be the set of
$\{\vec{u}^{(i)}=(u_1,\cdots,u_i ):0\le  u_1<\cdots< u_i\le n\}$. Let $l_i=\binom{n+1}{i}$. Fix the lexicographical order on  $\mathcal{I}_i$ such that $\vec{u}_{0}^{(i)}<\vec{u}_{1}^{(i)}<\cdots<\vec{u}_{l_i-1}^{(i)}$ in $\mathcal{I}_i$.  
 Write $e_{\vec{u}^{(i)}}=e_{u_1}\wedge \cdots \wedge e_{u_i}$. Order the
basis of $\wedge^i\mathcal{H}_{\Lambda^{p^m}}$ by 
$$e_{\vec{u}_{0}^{(i)}}<\cdots<e_{\vec{u}_{l_i-1}^{(i)}}.$$
Note that $e_{\vec{u}_0^{(i)}}=e_0\wedge e_1\wedge \cdots \wedge e_{i-1}$.
For a positive integer $k$, define the $k$-th symmetric power ${\rm Sym}^k(\wedge^i\mathcal{H}_{\Lambda^{p^m}})$ of $\wedge^i\mathcal{H}_{\Lambda^{p^m}}$
 to be the $p$-adic finite Banach module over $\mathcal{O}_{0,p^m}$ with basis
$$\{e_{\vec{u}_{j_1}^{(i)}}\cdots e_{\vec{u}_{j_k}^{(i)}}: \vec{u}_{j_1}^{(i)},\cdots,\vec{u}_{j_k}^{(i)} \in \mathcal{I}_i\ {\rm and}\ \vec{u}_{j_1}^{(i)}\le\cdots\le \vec{u}_{j_k}^{(i)}\}.$$
Then $\phi_{m,i}$ induces a map ${\rm Sym}^k \phi_{m,i}: {\rm Sym}^k(\wedge^i\mathcal{H}_{\Lambda})\rightarrow {\rm Sym}^k(\wedge^i\mathcal{H}_{\Lambda^{p^m}})$ defined by
$${\rm Sym}^k\phi_{m,i}(e_{\vec{u}_{j_1}^{(i)}}\cdots e_{\vec{u}_{j_k}^{(i)}}):=\phi_{m,i}(e_{\vec{u}_{j_1}^{(i)}})\cdots \phi_{m,i}(e_{\vec{u}_{j_k}^{(i)}}).$$

For each $\bar{\lambda}\in \bar{\mathbb{F}}_q^*$, we denote the set $\{\pi_j(\bar{\lambda})\}_{j=0}^n$ by $\mathcal{\mu}(\bar{\lambda})$.
Let $\mathcal{L}$ be a linear algebra operation such as symmetric power, or exterior power, or tensor power, or a composition or combination of such. Let $\mathcal{L}\mathcal{\mu}(\bar{\lambda})$ be the corresponding multi-set of eigenvalues of $\mathcal{L}\bar{\alpha}_{a\deg(\bar{\lambda}),\lambda}|_{\mathcal{L}(\mathcal{H}_{\Lambda}\otimes \Omega(\lambda))}$.
Define the associated $L$-function by
$$L(\mathcal{L},T):=\prod_{\bar{\lambda}\in |\mathbb{G}_m/\mathbb{F}_q|}\prod_{\tau(\bar{\lambda})\in \mathcal{L}\mathcal{\mu}(\bar{\lambda})}(1-\tau(\bar{\lambda})T^{\deg(\bar{\lambda})})^{-1}.$$
For example, if $\mathcal{L}$ is the operator of the $i$-th exterior power, then
$$\mathcal{L}\mathcal{\mu}(\bar{\lambda})=\wedge^i \mathcal{\mu}(\bar{\lambda})=\{\pi_{u_1}(\bar{\lambda})\cdots \pi_{u_i}(\bar{\lambda}):0\le u_1<\cdots <u_i\le n\}. $$
In this case, we simply write $\pi_{u_1}(\bar{\lambda})\cdots \pi_{u_i}(\bar{\lambda})$ by $\pi_{\vec{u}^{(i)}}(\bar{\lambda})$ for $\vec{u}^{(i)}=(u_1,\cdots,u_i)\in \mathcal{I}_i$. If $\mathcal{L}$ is the operation of the $k$-th symmetric power of the $i$-th exterior power, then
$$\mathcal{L}\mathcal{\mu}(\bar{\lambda})={\rm Sym}^k(\wedge^i \mathcal{\mu}(\bar{\lambda}))=\{\pi_{\vec{u}_{j_1}^{(i)}}(\bar{\lambda})\cdots \pi_{\vec{u}_{j_k}^{(i)}}(\bar{\lambda}):\vec{u}_{j_1}^{(i)},\cdots, \vec{u}_{j_k}^{(i)}\in \mathcal{I}_i,\vec{u}_{j_1}^{(i)}\le\cdots \le \vec{u}_{j_k}^{(i)}\}.$$

Let $k_s$ and $k_s'$ be two sequences of positive integers such that
$\lim_{s\rightarrow \infty} k_s=\lim_{s\rightarrow \infty} k_s'=\infty$ as integers, and $\lim_{s\rightarrow \infty}k_s'=-\kappa$,
$\lim_{s\rightarrow \infty}k_s=\kappa$ $p$-adically.
The following lemma follows from \cite[Lemma 7.2 ]{Wan99}.

\begin{lem}\label{lemma6}
 Let $\kappa\in \mathbb{Z}_p$. Then the following limiting formulas hold
\begin{equation}\label{eqn12}
L_{unit}(j,\kappa, T)=\lim_{s\rightarrow \infty} L(\phi_{a,j}^{k_s'} \otimes \phi_{a,j+1}^{k_s},T )
\end{equation}
for $j\ge 1$, and
$$L_{unit}(0,\kappa, T)=\lim_{s\rightarrow \infty} L(\phi_{a,1}^{k_s},T ).$$
\end{lem}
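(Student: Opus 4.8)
The proof will be a direct application of Wan's limiting formula \cite[Lemma 7.2]{Wan99}, so the real task is to identify, inside the exterior powers of the Dwork cohomology, the rank-one unit-root pieces and their Frobenius eigenvalues. First I would record the eigenvalue dictionary. The eigenvalues of $\bar\alpha_{a\deg(\bar\lambda),\lambda}$ on $\mathcal{H}_\Lambda\otimes_{\mathcal{O}_0}\Omega(\lambda)$ are $\pi_0(\bar\lambda),\dots,\pi_n(\bar\lambda)$, so by \eqref{eqnphi} the eigenvalues of $\phi_{a\deg(\bar\lambda),i,\lambda}$ on $\wedge^i(\mathcal{H}_\Lambda\otimes\Omega(\lambda))$ are
$$\bigl\{\, p^{-a\deg(\bar\lambda)\,i(i-1)/2}\,\pi_{u_1}(\bar\lambda)\cdots\pi_{u_i}(\bar\lambda)\ :\ 0\le u_1<\cdots<u_i\le n \,\bigr\}.$$
Writing $q=p^a$ and $\beta_r(\bar\lambda)=q^{-r\deg(\bar\lambda)}\pi_r(\bar\lambda)$, the condition ${\rm ord}_{q^{\deg(\bar\lambda)}}\pi_r(\bar\lambda)=r$ shows that the eigenvalue attached to the index $(0,1,\dots,i-1)$,
$$\gamma_i(\bar\lambda):=p^{-a\deg(\bar\lambda)\,i(i-1)/2}\,\pi_0(\bar\lambda)\cdots\pi_{i-1}(\bar\lambda)=\beta_0(\bar\lambda)\cdots\beta_{i-1}(\bar\lambda),$$
is the unique unit among these eigenvalues (every other one lies in the maximal ideal), and that by Lemma \ref{lemma1} it is a $1$-unit. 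Thus each $\wedge^i\mathcal{H}_\Lambda$ carries a rank-one unit-root sub-$\sigma$-module on which Frobenius acts by $\gamma_i$; put $\gamma_0=1$.

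Next I would use the telescoping identity $\beta_j(\bar\lambda)=\gamma_{j+1}(\bar\lambda)\,\gamma_j(\bar\lambda)^{-1}$, which rewrites
$$L_{unit}(j,\kappa,T)=\prod_{\bar\lambda\in|\mathbb{G}_m/\mathbb{F}_q|}\bigl(1-\gamma_{j+1}(\bar\lambda)^{\kappa}\,\gamma_j(\bar\lambda)^{-\kappa}\,T^{\deg(\bar\lambda)}\bigr)^{-1}.$$
Since $\gamma_j(\bar\lambda)$ and $\gamma_{j+1}(\bar\lambda)$ are $1$-units, the map $t\mapsto\gamma_{j+1}(\bar\lambda)^{t}\gamma_j(\bar\lambda)^{-t}$ is continuous on $\mathbb{Z}_p$, and uniformly so in $\bar\lambda$ because the congruence $\gamma_i(\bar\lambda)\equiv 1\bmod\pi$ of Lemma \ref{lemma1} is uniform. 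Hence for the chosen sequences ($k_s'\to-\kappa$, $k_s\to\kappa$ $p$-adically, both $\to+\infty$ as integers) one has $\gamma_j(\bar\lambda)^{k_s'}\gamma_{j+1}(\bar\lambda)^{k_s}\to\beta_j(\bar\lambda)^{\kappa}$ uniformly in $\bar\lambda$. In Wan's framework the unit-root direction of $\phi_{a,j}^{k_s'}\otimes\phi_{a,j+1}^{k_s}$ is precisely the rank-one $\sigma$-module with Frobenius $\gamma_j^{k_s'}\gamma_{j+1}^{k_s}$, so the above convergence is exactly the input that \cite[Lemma 7.2]{Wan99} turns into coefficientwise (in $T$) convergence of the associated $L$-functions to $L_{unit}(j,\kappa,T)$; for $j=0$ one drops the trivial factor $\phi_{a,0}$ ($\gamma_0=1$), leaving $\lim_s L(\phi_{a,1}^{k_s},T)$.

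Finally I would verify the hypotheses of \cite[Lemma 7.2]{Wan99}: the $\sigma$-modules $\phi_{a,j}$, $\phi_{a,j+1}$ are nuclear and overconvergent over $\mathbb{G}_m/\mathbb{F}_q$ — this follows at once from the growth estimate $a_{ij}(\Lambda)\in L((p-1)/p^2;i)$ of \eqref{eqnaij} (which also yields \eqref{eqn3}), together with the fact that exterior, symmetric and tensor powers preserve overconvergence — and their ordinary unit-root decomposition is the rank-one splitting constructed above, which is forced by the shape of the constant term $\mathcal{A}_1(0)$ and the congruences of Lemma \ref{lemma3}. I expect the genuine obstacle to be this last step: faithfully translating the explicit Dwork-cohomological objects of Section 2 into the axiomatic setting of nuclear $\sigma$-modules and their unit-root parts in \cite{Wan99}, and confirming that the limiting lemma applies with the radii of overconvergence we actually control — the eigenvalue bookkeeping itself being routine once Lemmas \ref{lemma3} and \ref{lemma1} are in hand.
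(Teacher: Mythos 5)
Your proposal is correct and follows essentially the same route as the paper, which simply invokes Wan's limiting formula \cite[Lemma 7.2]{Wan99} applied to the exterior-power Frobenii $\phi_{a,j}$, with the unit eigenvalue of $\phi_{a\deg(\bar\lambda),i,\lambda}$ being $\beta_0(\bar\lambda)\cdots\beta_{i-1}(\bar\lambda)$ and the telescoping $\beta_j=\gamma_{j+1}\gamma_j^{-1}$ implicit in the choice of the sequences $k_s'\to-\kappa$, $k_s\to\kappa$. Your added bookkeeping (identification of the rank-one unit-root part via Lemma \ref{lemma3}, overconvergence from \eqref{eqnaij}) is exactly the verification the paper leaves to the citation.
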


Note that $\wedge^l \phi_{a,j}\rightarrow 0$ as $l\rightarrow \infty$. Using \cite[Lemma 4.2]{Wan99} and applying Lemma \ref{lemma6} to the proof of the last equation in \cite[Section 6]{Wan99}, we have the following formulas hold: for $j\ge 1$,
\begin{align}\label{eqn06}
L_{unit}(j,\kappa, T)=&\lim_{s\rightarrow \infty}  L({\rm Sym}^{k_s'} \phi_{a,j} \otimes {\rm Sym}^{k_s}\phi_{a,j+1},T ) \\\nonumber
&\cdot \prod_{\iota_1\ge 2} \lim_{s\rightarrow \infty} L(\mathcal{E}(\iota_1)\otimes {\rm Sym}^{k_{s}}\phi_{a,j+1},T )^{(-1)^{\iota_1-1}(\iota_1-1)}\\\nonumber
&\cdot \prod_{\iota_2\ge 2} \lim_{s\rightarrow \infty}L({\rm Sym}^{k_s'} \phi_{a,j} \otimes \mathcal{E}(\iota_2),T )^{(-1)^{\iota_2-1}(\iota_2-1)}\\\nonumber
& \cdot\prod_{\iota_1,\iota_2\ge 2} \lim_{s\rightarrow \infty}L(\mathcal{E}(\iota_1,\iota_2),T )^{(-1)^{\iota_1+\iota_2}(\iota_1+\iota_2-2)},
\end{align}
where
$$\mathcal{E}(\iota_1)= {\rm Sym}^{k_s'-\iota_1} \phi_{a,j} \otimes \wedge^{\iota_1} \phi_{a,j},\ \mathcal{E}(\iota_2)={\rm Sym}^{k_{s}-\iota_2}\phi_{a,j+1}\otimes \wedge^{\iota_2} \phi_{a,j+1}$$
and
$$\mathcal{E}(\iota_1,\iota_2)={\rm Sym}^{k_s'-\iota_1} \phi_{a,j} \otimes \wedge^{\iota_1} \phi_{a,j}\otimes {\rm Sym}^{k_{s}-\iota_2}\phi_{a,j+1}\otimes \wedge^{\iota_2} \phi_{a,j+1}, $$
and
\begin{align}\label{eqn0002}
L_{unit}(0,\kappa, T)=\lim_{s\rightarrow \infty}  L( {\rm Sym}^{k_s}\phi_{a,1},T )
 \prod_{\iota\ge 2} \lim_{s\rightarrow \infty} L({\rm Sym}^{k_s-\iota} \phi_{a,1} \otimes \wedge^{\iota} \phi_{a,1},T )^{(-1)^{\iota-1}(\iota-1)}.
\end{align}
{\bf Remark}: From (\ref{eqn06}) and (\ref{eqn0002}) we conclude that the unit root  of
$L_{unit}(j,\kappa, T)(j\ge 0)$ only comes from the first factor. 

\subsection{Infinity symmetric power}
For $1\le i\le n+1$ and $m\in \mathbb{Z}_{\ge0}$, let $S^{(i)}(\mathcal{O}_{0,p^m})$  be the formal power series ring over
$\mathcal{O}_{0,p^m}$ in the variables $\{ g_{\vec{u}_j^{(i)}}\}$ which are indexed by $\mathcal{I}_i\setminus\{\vec{u}_{0}^{(i)}\}$. Equip this ring with the sup-norm on the coefficients in $\mathcal{O}_{0,p^m}$.
Write the monomial of degree or length $r$ in the variables $\{ g_{\vec{u}_j^{(i)}}\}$ by $g_{\vec{\mathbf{u}}^{(i)}}=g_{\vec{u}_{j_1}^{(i)}}\cdots g_{\vec{u}_{j_r}^{(i)}}$ and let $|\vec{\mathbf{u}}^{(i)}|=r$. For $\zeta\in S^{(i)}(\mathcal{O}_{0,p^m})$, define length$(\zeta)$ by the supremum of the length of those monomials appearing with nonzero coefficients.
Denote by $\mathcal{M}^{(i)}$ the set of all indices $\vec{\mathbf{u}}^{(i)}$ corresponding to monomials $g_{\vec{\mathbf{u}}^{(i)}}$. Define a $p$-adic Banach subspace of $S^{(i)}(\mathcal{O}_{0,p^m})$ over $\mathcal{O}_{0,p^m}$ by
$$\mathcal{S}^{(i)}(\mathcal{O}_{0,p^m}):=\{\sum_{\vec{\mathbf{u}}^{(i)}\in \mathcal{M}^{(i)}} \zeta(\vec{\mathbf{u}}^{(i)})g_{\vec{\mathbf{u}}^{(i)}}: \zeta(\vec{\mathbf{u}}^{(i)})\in \mathcal{O}_{0,p^m},\zeta(\vec{\mathbf{u}}^{(i)})\rightarrow 0\ {\rm as}\ |\vec{\mathbf{u}}^{(i)}|\rightarrow \infty\}.$$

We view $\wedge^i\mathcal{H}_{\Lambda^{p^m}}$ as a subspace of $\mathcal{S}^{(i)}(\mathcal{O}_{0,p^m})$ by defining $\Upsilon:\wedge^i\mathcal{H}_{\Lambda^{p^m}}\to\mathcal{S}^{(i)}(\mathcal{O}_{0,p^m})$ via
$$\Upsilon(e_{\vec{u}_0^{(i)}})=1 \ {\rm and}\  \Upsilon(e_{\vec{u}_j^{(i)}})=g_{\vec{u}_j^{(i)}}\ {\rm for}\ j\ge 1. $$
From (\ref{eqnaij}), we conclude that $\Upsilon\circ\phi_{m,i}(e_{\vec{u}_0^{(i)}})=1+\eta$ with $\eta\in \mathcal{S}^{(i)}(\mathcal{O}_{0,p^m})$ and $|\eta|<1$. Thus $\big(\Upsilon\circ\phi_{m,i}(e_{\vec{u}_0^{(i)}})\big)^{\kappa}$ is well-defined and belongs to $\mathcal{S}^{(i)}(\mathcal{O}_{0,p^m})$
for $\kappa\in \mathbb{Z}_p.$ Hence we can define a map from $\mathcal{S}^{(i)}(\mathcal{O}_{0})$ to $\mathcal{S}^{(i)}(\mathcal{O}_{0,p^m})$ by
$$[\phi_{m,i}]_{\infty,\kappa}(g_{\vec{u}_{j_1}^{(i)}}\cdots g_{\vec{u}_{j_r}^{(i)} } ):=\big(\Upsilon\circ \phi_{m,i}(e_{\vec{u}_0^{(i)}})\big)^{\kappa-r}\big(\Upsilon\circ\phi_{m,i}(e_{\vec{u}_{j_1}^{(i)}})\big)\cdots \big(\Upsilon\circ\phi_{m,i}(e_{\vec{u}_{j_r}^{(i)}})\big).$$

Let $k$ be a positive integer. Then define an $\mathcal{O}_{0,p^m}$-submodule of $\mathcal{S}^{(i)}(\mathcal{O}_{0,p^m})$ by
$$\mathcal{S}_k^{(i)}(\mathcal{O}_{0,p^m}):=\{ \zeta \in \mathcal{S}^{(i)}(\mathcal{O}_{0,p^m}): {\rm length}(\zeta)\le k\}.$$
There is an identification ${\rm Sym}^k(\wedge^i\mathcal{H}_{\Lambda^{p^m}})\cong\mathcal{S}_k^{(i)}(\mathcal{O}_{0,p^m})$ by sending 
$$e_{\vec{u}_0^{(i)}}^{k-r}e_{\vec{u}_{j_1}^{(i)}}\cdots e_{\vec{u}_{j_r}^{(i)}}\mapsto g_{\vec{u}_{j_1}^{(i)}}\cdots g_{\vec{u}_{j_r}^{(i)}}. $$
Define a map $[\phi_{m,i}]_k$ from  $\mathcal{S}_k^{(i)}(\mathcal{O}_{0})$ to $\mathcal{S}_k^{(i)}(\mathcal{O}_{0,p^m})$ by
$$[\phi_{m,i}]_k(g_{\vec{u}_{j_1}^{(i)}}\cdots g_{\vec{u}_{j_r}^{(i)}} ):=\big(\Upsilon\circ\phi_{m,i}(e_{\vec{u}_0^{(i)}})\big)^{k-r}\big(\Upsilon\circ\phi_{m,i}(e_{\vec{u}_{j_1}^{(i)}})\big)\cdots \big(\Upsilon\circ\phi_{m,i}(e_{\vec{u}_{j_r}^{(i)}})\big) .$$
Hence
$$ [\phi_{m,i}]_k(g_{\vec{u}_{j_1}^{(i)}}\cdots g_{\vec{u}_{j_r}^{(i)}} )\cong {\rm Sym}^k\phi_{m,i}\big((e_{\vec{u}_0^{(i)}})^{k-r}e_{\vec{u}_{j_1}^{(i)}}\cdots e_{\vec{u}_{j_r}^{(i)}}\big). $$

We extend the map $[\phi_{m,i}]_k$ to $\mathcal{S}^{(i)}(\mathcal{O}_{0})$ by requiring
$$[\phi_{m,i}]_{k}(g_{\vec{u}_{j_1}^{(i)}}\cdots g_{\vec{u}_{j_r}^{(i)}} )=0$$
for $\vec{u}_0^{(i)}< \vec{u}_{j_1}^{(i)}\le \cdots\le \vec{u}_{j_r}^{(i)}$ with $r>k$.
Using the same argument as \cite[Lemma 4.3]{HS17}, we have
\begin{lem} \label{lemma7}
Let $\kappa\in \mathbb{Z}_p$ and $k_s$ be a sequence of positive integers tending to infinity as integers and $\lim_{s\rightarrow \infty}k_s=\kappa$
$p$-adically. Then as a map from $\mathcal{S}^{(i)}(\mathcal{O}_{0})$ to $\mathcal{S}^{(i)}(\mathcal{O}_{0,p^m})$, we have
$$ \lim_{s\rightarrow \infty}[\phi_{m,i}]_{k_s}= [\phi_{m,i}]_{\infty,\kappa}.$$
\end{lem}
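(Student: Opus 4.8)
The plan is to follow the strategy of \cite[Lemma 4.3]{HS17} and reduce the claimed limit to an elementary continuity statement about the $\kappa$-th power of a $1$-unit, combined with the nested-binomial identity relating $\mathrm{Sym}^{k}$ to $[\,\cdot\,]_{\infty,\kappa}$. First I would fix a monomial basis element $g_{\vec{u}_{j_1}^{(i)}}\cdots g_{\vec{u}_{j_r}^{(i)}}$ of $\mathcal{S}^{(i)}(\mathcal{O}_{0})$ of length $r$ and compare the two images
$$[\phi_{m,i}]_{k_s}(g_{\vec{u}_{j_1}^{(i)}}\cdots g_{\vec{u}_{j_r}^{(i)}})=\bigl(\Upsilon\circ\phi_{m,i}(e_{\vec{u}_0^{(i)}})\bigr)^{k_s-r}\prod_{\ell=1}^{r}\Upsilon\circ\phi_{m,i}(e_{\vec{u}_{j_\ell}^{(i)}})$$
and its counterpart with exponent $\kappa-r$. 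Since the factors $\prod_{\ell}\Upsilon\circ\phi_{m,i}(e_{\vec{u}_{j_\ell}^{(i)}})$ are identical in both expressions and lie in the unit ball of $\mathcal{S}^{(i)}(\mathcal{O}_{0,p^m})$, it suffices to show that $\bigl(\Upsilon\circ\phi_{m,i}(e_{\vec{u}_0^{(i)}})\bigr)^{k_s-r}\to\bigl(\Upsilon\circ\phi_{m,i}(e_{\vec{u}_0^{(i)}})\bigr)^{\kappa-r}$ in $\mathcal{S}^{(i)}(\mathcal{O}_{0,p^m})$ as $s\to\infty$, and that this convergence is uniform over all monomials (so that the two extended maps agree in the limit, including on the monomials of length $r>k_s$ where $[\phi_{m,i}]_{k_s}$ is declared to vanish).

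Write $u:=\Upsilon\circ\phi_{m,i}(e_{\vec{u}_0^{(i)}})=1+\eta$ with $\eta\in\mathcal{S}^{(i)}(\mathcal{O}_{0,p^m})$ and $|\eta|<1$, as established just before the statement via \eqref{eqnaij}. For such a $1$-unit the binomial series $u^{x}=\sum_{t\ge0}\binom{x}{t}\eta^{t}$ converges in $\mathcal{S}^{(i)}(\mathcal{O}_{0,p^m})$ for every $x\in\mathbb{Z}_p$, and the map $\mathbb{Z}_p\to\mathcal{S}^{(i)}(\mathcal{O}_{0,p^m})$, $x\mapsto u^{x}$, is continuous: if $x\equiv x'\bmod p^N$ then $\binom{x}{t}\equiv\binom{x'}{t}\bmod p^{N-\mathrm{ord}_p(t!)}$, and combined with $\mathrm{ord}_p(\eta^{t})\ge c\,t$ for some $c>0$ (coming from $|\eta|<1$) this forces $|u^{x}-u^{x'}|\to0$ as $N\to\infty$, independently of $x,x'$. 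Applying this with $x=k_s-r$ and $x'=\kappa-r$ and using $k_s\to\kappa$ $p$-adically gives the required convergence, with a rate depending only on $\mathrm{ord}_p(k_s-\kappa)$ and not on $r$; hence the convergence $[\phi_{m,i}]_{k_s}\to[\phi_{m,i}]_{\infty,\kappa}$ holds in operator norm.

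The one point that needs genuine care — and which I expect to be the main obstacle — is the uniformity across monomials of unbounded length, in particular the interface with the convention $[\phi_{m,i}]_{k_s}(g_{\vec{u}_{j_1}^{(i)}}\cdots g_{\vec{u}_{j_r}^{(i)}})=0$ for $r>k_s$. Here one must check that the target $[\phi_{m,i}]_{\infty,\kappa}(g_{\vec{u}_{j_1}^{(i)}}\cdots g_{\vec{u}_{j_r}^{(i)}})$, which equals $u^{\kappa-r}\prod_{\ell}\Upsilon\circ\phi_{m,i}(e_{\vec{u}_{j_\ell}^{(i)}})$, itself tends to $0$ as $r\to\infty$: this follows because each $\Upsilon\circ\phi_{m,i}(e_{\vec{u}_{j_\ell}^{(i)}})$ for $j_\ell\ge1$ lies in the maximal ideal (by \eqref{eqn2}–\eqref{eqn3} and Lemma \ref{lemma1}, $\phi_{m,i}(e_{\vec{u}_j^{(i)}})\equiv 0\bmod\pi$ for $j\ge1$), so the product over $\ell$ has norm $\le|\pi|^{r}\to0$, while $|u^{\kappa-r}|=1$. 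Thus for $r$ large both maps are small, for $r$ bounded the binomial-continuity estimate applies uniformly, and splitting the range of $r$ at a threshold depending on the desired accuracy yields the uniform limit. Assembling these pieces exactly as in \cite[Lemma 4.3]{HS17} completes the proof.
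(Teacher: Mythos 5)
Your proof is correct and takes essentially the same route as the paper, which simply invokes the argument of \cite[Lemma 4.3]{HS17}: write $\Upsilon\circ\phi_{m,i}(e_{\vec{u}_0^{(i)}})=1+\eta$ with $|\eta|<1$, use the binomial series and $p$-adic continuity in the exponent (uniformly in the monomial, since the remaining factors have norm $\le 1$), and dispose of the monomials of length $r>k_s$ using that $\Upsilon\circ\phi_{m,i}(e_{\vec{u}_j^{(i)}})$ lies in the maximal ideal for $j\ge 1$. The only small adjustment: for that last (generic, non-specialized) fact you should cite the coefficient estimates (\ref{eqnaij}) together with (\ref{eqn1}) — exactly as the paper does when it checks $B^{(j,\infty,\kappa)}\equiv\operatorname{diag}(1,0)\bmod \pi$ — rather than the specializations (\ref{eqn2})--(\ref{eqn3}) and Lemma \ref{lemma1}, which concern $\Lambda=\lambda$.
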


 Define
$$L([\phi_{a,i}]_{\infty,\kappa},T):=\prod_{\bar{\lambda}\in |\mathbb{G}_m/\mathbb{F}_q|}\frac{1}{\det(1-T[\phi_{a\deg(\bar{\lambda}),i,\lambda}]_{\infty,\kappa}|_{\mathcal{S}^{(i)}(\mathcal{O}_{0})\otimes \Omega(\lambda)})},$$
where $[\phi_{a\deg(\bar{\lambda}),i,\lambda}]_{\infty,\kappa}$ is the specialization of $[\phi_{a\deg(\bar{\lambda}),i}]_{\infty,\kappa}$ at $\Lambda=\lambda$. Similarly, we can define the $L$-function $L([\phi_{a,i}]_{k},T)$ for any integer $k$.
The same arguments as \cite[Corollary 2.4]{H14} leads to
\begin{equation}\label{eqn0003} \lim_{s\rightarrow \infty}L({\rm Sym}^{k_s}\phi_{a,i},T)=\lim_{s\rightarrow \infty}L([\phi_{a,i}]_{k_s},T)= L([\phi_{a,i}]_{\infty,\kappa},T).
\end{equation}

Define $\psi_{\Lambda}:\mathcal{S}^{(i)}(\mathcal{O}_{0,p})\to\mathcal{S}^{(i)}(\mathcal{O}_{0})$ by 
$$\psi_{\Lambda}: \sum a(s;\vec{\mathbf{u}}^{(i)})\Lambda^sg_{\vec{\mathbf{u}}^{(i)}}\mapsto \sum a(ps;\vec{\mathbf{u}}^{(i)})\Lambda^s g_{\vec{\mathbf{u}}^{(i)}}.$$
One checks that $$\rho^{(i,\infty,\kappa)}:=\psi_{\Lambda}^a\circ [\phi_{a,i}]_{\infty,\kappa}:\mathcal{S}^{(i)}(\mathcal{O}_{0})\to\mathcal{S}^{(i)}(\mathcal{O}_{0})$$
is a completely continuous operator as in \cite[section 3]{HS17}.

For $1\le j\le n$, let $\mathcal{S}^{(j)}(\mathcal{O}_{0}) \hat{\otimes} \mathcal{S}^{(j+1)}(\mathcal{O}_{0})$ be the completed tensor product of $\mathcal{S}^{(j)}(\mathcal{O}_{0}) $ and $\mathcal{S}^{(j+1)}(\mathcal{O}_{0})$. Let  $\beta^{(j,\infty,\kappa)}:=\rho^{(j,\infty,-\kappa)}\otimes \rho^{(j+1,\infty,\kappa)}$, which is also nuclear.
Let $B^{(j,\infty,\kappa)}$ be the matrix of $[\phi_{a,j}]_{\infty,-\kappa}\otimes[\phi_{a,j+1}]_{\infty,\kappa}$ with respect to the basis
$$\{g_{\vec{\mathbf{u}}^{(j)}} \otimes g_{\vec{\mathbf{v}}^{(j+1)}}: \vec{\mathbf{u}}^{(j)}\in \mathcal{M}^{(j)}, \vec{\mathbf{v}}^{(j+1)}\in \mathcal{M}^{(j+1)} \} .$$
Then $$B^{(j,\infty,\kappa)}=\sum_{r\ge0}\pi^{(n+1)r}b_r^{(j,\infty,\kappa)}\Lambda^r,$$
where the coefficient $b_r^{(j,\infty,\kappa)}$ is a matrix with entries in $R$.
Let $F_{B^{(j,\infty,\kappa)} }:=(b_{qr-s}^{(j,\infty,\kappa)} )_{r,s\in \mathbb{Z}_{\ge 0}}$ with $b_{qr-s}^{(j,\infty,\kappa)}:=0 $ if $qr-s<0$.
We may view $\mathcal{S}^{(j)}(\mathcal{O}_{0}) \hat{\otimes} \mathcal{S}^{(j+1)}(\mathcal{O}_{0})$ as a $p$-adic Banach space over $R$ with orthonormal basis
$$\mathcal{B}=\{\pi^{(n+1)r}\Lambda^r g_{\vec{\mathbf{u}}^{(j)}} \otimes g_{\vec{\mathbf{v}}^{(j+1)}} :r\in \mathbb{Z}_{\ge 0}, \vec{\mathbf{u}}^{(j)}\in \mathcal{M}^{(j)}, \vec{\mathbf{v}}^{(j+1)}\in \mathcal{M}^{(j+1)} \}.$$
As described prior to \cite[Lemma 2.3]{HS14}, $F_{B^{(j,\infty,\kappa)} }$ is the matrix of $\beta^{(j,\infty,\kappa)}$ with respect to the basis $\mathcal{B}$.
Using Dwork's trace formula \cite[lemma 4.1]{Wan96} and the same argument as \cite[Equation (10)]{HS17},
 we have that
\begin{align*}
L([\phi_{a,j}]_{\infty,-\kappa}\otimes[\phi_{a,j+1}]_{\infty,\kappa},T)\nonumber
= \frac{\det(1-\beta^{(j,\infty,\kappa)}T)}{\det(1-q\beta^{(j,\infty,\kappa)}T)}.
\end{align*}
It follows from (\ref{eqn0003}) that
\begin{equation}\label{eqn05}\lim_{s\rightarrow \infty}L({\rm Sym}^{k_s'}\phi_{a,j}\otimes {\rm Sym}^{k_s}\phi_{a,j+1},T)=\frac{\det(1-\beta^{(j,\infty,\kappa)}T)}{\det(1-q\beta^{(j,\infty,\kappa)}T)}.
\end{equation}

Similarly, when $j=0$, let $\beta^{(0,\infty,\kappa)}=\rho^{(1,\infty,\kappa)}$. We have the following formula
\begin{equation}
\lim_{s\rightarrow \infty}  L( {\rm Sym}^{k_s}\phi_{a,1},T )=\frac{\det(1-\beta^{(0,\infty,\kappa)}T)}{\det(1-q\beta^{(0,\infty,\kappa)}T)}.
\end{equation}
The factor $\det(1-q\beta^{(j,\infty,\kappa)}T)$ would not contribute any zero or pole in the unit disc. From (\ref{eqnaij}) and (\ref{eqn1}), one checks that the matrix $B^{(j,\infty,\kappa)}$ takes the form $\begin{pmatrix} 1 & 0 \\ 0 & 0 \end{pmatrix}$ mod $\pi$. Thus $\det(1-TF_{B^{(j,\infty,\kappa)}})\equiv 1-T$ mod $\pi$. Combining with (\ref{eqn06}) and (\ref{eqn0002}),  we prove that:
  \begin{thm}\label{thm2} For $0\le j\le n$, the unit root $L$-function $L_{unit}(j,\kappa, T)$ has a unique unit root, which is the unique unit root of $\det(1-\beta^{(j,\infty,\kappa)}T)$.
  \end{thm}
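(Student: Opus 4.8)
The plan is to strip off, using the product decompositions (\ref{eqn06}) and (\ref{eqn0002}), every factor that cannot contribute a zero or pole at a point with $\mathrm{ord}_pT=0$, and then to locate the single surviving zero by a reduction modulo $\pi$. First I would single out the ``main'' factor: for $j\ge 1$ it is the first factor of (\ref{eqn06}), which by (\ref{eqn05}) equals $\det(1-\beta^{(j,\infty,\kappa)}T)/\det(1-q\beta^{(j,\infty,\kappa)}T)$, while for $j=0$ it is the first factor of (\ref{eqn0002}), equal to $\det(1-\beta^{(0,\infty,\kappa)}T)/\det(1-q\beta^{(0,\infty,\kappa)}T)$ by the formula displayed just after (\ref{eqn05}). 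Everything then reduces to this ratio of Fredholm determinants together with the remaining factors.

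Next I would dispose of the denominator. Write $\det(1-\beta^{(j,\infty,\kappa)}T)=\sum_{k\ge 0}d_kT^k$ with $d_0=1$; since $F_{B^{(j,\infty,\kappa)}}$ has entries in $R$, the Fredholm-determinant coefficients $d_k$ lie in $R$, and $\det(1-q\beta^{(j,\infty,\kappa)}T)=\sum_{k\ge 0}q^kd_kT^k$ has $\mathrm{ord}_p(q^kd_k)\ge ak$ for every $k\ge 1$. Hence the Newton polygon of the denominator has all slopes $\ge a>0$, so all of its reciprocal roots have $\mathrm{ord}_p\ge a$; in particular the denominator has no unit reciprocal root, so it has no zero with $\mathrm{ord}_pT=0$, contributing neither a unit pole to the ratio nor any cancellation at a unit point.

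Then I would count the unit roots of the numerator. As noted just before the statement, Lemma \ref{lemma3} (via (\ref{eqnaij}) and (\ref{eqn1})) forces $B^{(j,\infty,\kappa)}\equiv\begin{pmatrix}1&0\\0&0\end{pmatrix}\bmod\pi$: by Lemma \ref{lemma1}, $\phi_{a,j}$ (resp. $\phi_{a,j+1}$) fixes $e_0\wedge\cdots\wedge e_{j-1}$ modulo $\pi$ and annihilates every other basis vector modulo $\pi$, so on $\mathcal{S}^{(j)}(\mathcal{O}_0)$ the operator $[\phi_{a,j}]_{\infty,-\kappa}$ reduces modulo $\pi$ to the projection onto the constant monomial (using $\Upsilon\circ\phi_{a,j}(e_{\vec{u}_0^{(j)}})=1+\eta$ with $|\eta|<1$, whence $(1+\eta)^{-\kappa-r}\equiv 1\bmod\pi$, while the remaining factors are $\equiv 0\bmod\pi$ once $r\ge 1$), and likewise for $[\phi_{a,j+1}]_{\infty,\kappa}$. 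Consequently $\det(1-TF_{B^{(j,\infty,\kappa)}})=\det(1-\beta^{(j,\infty,\kappa)}T)\equiv 1-T\bmod\pi$, so $\mathrm{ord}_pd_1=0$ and $\mathrm{ord}_pd_k>0$ for $k\ge 2$; thus the Newton polygon of $\det(1-\beta^{(j,\infty,\kappa)}T)$ has one segment of slope $0$ and horizontal length $1$, followed by segments of strictly positive slope. Hence $\det(1-\beta^{(j,\infty,\kappa)}T)$ has a unique unit root, and together with the previous paragraph the main factor has exactly one unit zero, namely that unit root, and no unit pole.

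Finally I would eliminate the remaining factors of (\ref{eqn06}) and (\ref{eqn0002}); this is exactly the Remark following (\ref{eqn0002}). Each of those factors carries an exterior power $\wedge^{\iota}\phi_{a,j}$ or $\wedge^{\iota}\phi_{a,j+1}$ with $\iota\ge 2$, and since $\phi_{a,j},\phi_{a,j+1}$ are rank one modulo $\pi$ by Lemma \ref{lemma1}, such exterior powers vanish modulo $\pi$; the associated $L$-functions are then $\equiv 1\bmod\pi$ and so have no zero or pole with $\mathrm{ord}_pT=0$. Combining the three steps, the unit zeros and poles of $L_{unit}(j,\kappa,T)$ coincide with those of the main factor, so $L_{unit}(j,\kappa,T)$ has a unique unit root, equal to the unique unit root of $\det(1-\beta^{(j,\infty,\kappa)}T)$. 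The step demanding the most care is the mod $\pi$ reduction of $B^{(j,\infty,\kappa)}$: one must verify that the infinite symmetric/tensor power construction $[\phi_{a,j}]_{\infty,-\kappa}\otimes[\phi_{a,j+1}]_{\infty,\kappa}$ inherits the triangular shape of $\mathcal{A}_1(\Lambda)$ coming from Lemma \ref{lemma3}, and that the surviving idempotent lands precisely in the coordinate indexed by $g_{\vec{u}_0^{(j)}}\otimes g_{\vec{u}_0^{(j+1)}}$, the ``constant'' coordinate of $\mathcal{S}^{(j)}(\mathcal{O}_0)\hat{\otimes}\mathcal{S}^{(j+1)}(\mathcal{O}_0)$.
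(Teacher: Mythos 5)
Your proposal is correct and follows essentially the same route as the paper: isolate the first factor of (\ref{eqn06})/(\ref{eqn0002}) via the Remark, identify it with $\det(1-\beta^{(j,\infty,\kappa)}T)/\det(1-q\beta^{(j,\infty,\kappa)}T)$ by (\ref{eqn05}), discard the denominator as having no unit zeros, and use the mod $\pi$ reduction $B^{(j,\infty,\kappa)}\equiv\begin{pmatrix}1&0\\0&0\end{pmatrix}$, hence $\det(1-TF_{B^{(j,\infty,\kappa)}})\equiv 1-T \bmod \pi$, to get the unique unit root. The extra detail you supply (Newton polygon bounds for the denominator, rank-one mod $\pi$ vanishing of $\wedge^{\iota}\phi$ for $\iota\ge 2$) only makes explicit what the paper asserts.
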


 \section{Dual theory}
\subsection{The dual module}
 Let
 $$\mathcal{C}_{0,p^{m}}^*:=\{\zeta^*=\sum_{{\bf u}\in \mathbb{Z}^n} \zeta({\bf u}) \pi^{-w({\bf u})} \Lambda^{-p^m s({\bf u})}\mathbf{x}^{-{\bf u}}: \zeta({\bf u})\in \mathcal{O}_{0,p^{m}} \}.$$
We define a pairing
\begin{equation}\label{pair} \langle ,\rangle: \mathcal{C}_{0,p^{m}}\times \mathcal{C}_{0,p^{m}}^*\rightarrow \mathcal{O}_{0,p^{m}}
\end{equation}
by
$$\langle \zeta,\zeta^*\rangle:={\rm the\ constant\ term\ with\ respect\ to\ } \mathbf{x} {\rm\ of\ the\ product}\ \zeta\cdot\zeta^*$$
for $\zeta=\sum_{{\bf u}\in \mathbb{Z}^n} \zeta_1({\bf u}) \pi^{w({\bf u})} \Lambda^{p^m s({\bf u})}\mathbf{x}^{{\bf u}}$ and $\zeta^*=\sum_{{\bf v}\in \mathbb{Z}^n} \zeta_2({\bf v}) \pi^{-w({\bf v})} \Lambda^{-p^m s({\bf v})}\mathbf{x}^{-{\bf v}}$.
This pairing is well-defined since $\zeta_1({\bf u})\rightarrow 0$ as $w({\bf u})\rightarrow \infty$ and $\zeta_2({\bf v})$ is bounded.  This is a perfect pairing as shown in \cite[Section 2]{SP0}. Hence $\mathcal{C}_{0,p^{m}}^*$ is the dual $\mathcal{O}_{0,p^{m}}$-module of $\mathcal{C}_{0,p^{m}}$.

Let $$D_{i,\Lambda^{p^m}}^*:=-x_i\frac{\partial}{\partial x_i}+\pi (x_i-\frac{\Lambda^{p^m}}{x_1\cdots x_n})$$
acting on $\mathcal{C}_{0,p^m}$
 for $i=1,\cdots,n$.
 Then for $\zeta^*\in \mathcal{C}_{0,p^{m}}^*$ and $\zeta\in \mathcal{C}_{0,p^{m}}$, we have that (see \cite[(2.3.13)]{SP0})

\begin{equation}\label{eqD}
\langle \zeta,D_{i,\Lambda^{p^m}}^*\zeta^*\rangle =\langle D_{i,\Lambda^{p^m}}\zeta,\zeta^*\rangle.
\end{equation}
 Recall that $\mathcal{H}_{\Lambda^{p^m}}=\mathcal{C}_{0,p^m}/\sum_{i=1}^n D_{i,\Lambda^{p^m}} \mathcal{C}_{0,p^{m}}$.
 The $\mathcal{O}_{0,p^{m}}$-dual of $\mathcal{H}_{\Lambda^{p^m}}$ is the annihilator  of $\sum_{i=1}^n D_{i,\Lambda^{p^m}} \mathcal{C}_{0,p^{m}}$ in $\mathcal{C}_{0,p^{m}}^*$, denote it by $\mathcal{H}_{\Lambda^{p^m}}^*$. Then by (\ref{eqD}) we conclude that
 $$\mathcal{H}_{\Lambda^{p^m}}^*=\{\zeta^*\in \mathcal{C}_{0,p^{m}}^*: D_{i,\Lambda^{p^m}}^* \zeta^*=0\ {\rm for\ all}\ 1\le i\le n \} .$$
 Then $\mathcal{H}_{\Lambda^{p^m}}^*$ is also a free $\mathcal{O}_{0,p^m}$-module of rank $n+1$. Denote the dual basis of $\{e_i\}_{i=0}^n$ by  $\{e_{i,\Lambda^{p^m}}^*\}_{i=0}^n $. Note that this basis relies on $m$.

For any $m\in \mathbb{Z}_{\ge 0}$ and ring $\mathcal{R}$ such that $\mathcal{O}_{0,p^m}\subset \mathcal{R}\subset \Omega[[\Lambda]]$, we define
 $$\mathcal{H}_{\Lambda^{p^m}}(\mathcal{R}):=\mathcal{H}_{\Lambda^{p^m}}\otimes_{\mathcal{O}_{0,p^m}} \mathcal{R},$$
 and its dual $\mathcal R$-module by
 $$ \mathcal{H}_{\Lambda^{p^m}}^*(\mathcal{R}):=\mathcal{H}_{\Lambda^{p^m}}^*\otimes_{\mathcal{O}_{0,p^m}} \mathcal{R}.$$

 Let $\Phi_{\mathbf{x}}$ act on the
 formal power series ring involving $\mathbf{x}$ by $\Phi_{\mathbf{x}}(\sum a_{\mathbf{u}}\mathbf{x}^{\mathbf{u}}):=\sum a_{\mathbf{u}}\mathbf{x}^{p\mathbf{u}}$ and $\alpha_{1}^*:=F(\Lambda,\mathbf{x})\circ \Phi_{\mathbf{x}}$. 
 It follows from the remark of \cite[Proposition 2.4.4]{SP0} and \cite[Proposition 2.4.7]{SP0} that we have the following property about $\alpha_{1}^*$.
 \begin{lem} \label{lem4.1}
We have that $\alpha_{1}^*$ is an $\mathcal{O}_{0,p}$-linear map from $\mathcal{H}_{\Lambda^p}^{*}(\mathcal{O}_{0,p})$
to $\mathcal{H}_{\Lambda}^{*}(\mathcal{O}_{0,p})$ and
\begin{align*}
\langle \zeta, \alpha_{1}^*\zeta^*\rangle=\langle\bar{\alpha}_1\zeta, \zeta^*\rangle
\end{align*}
for $\zeta\in \mathcal{H}_{\Lambda}(\mathcal{O}_{0})$ and $\zeta^*\in \mathcal{H}_{\Lambda^p}^{*}(\mathcal{O}_{0,p})$.
 \end{lem}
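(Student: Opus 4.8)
The plan is to identify $\alpha_{1}^*$ as the chain-level adjoint of the Frobenius $\alpha_1=\psi_{\mathbf x}\circ F(\Lambda,\mathbf x)$ for the pairing (\ref{pair}), to verify its mapping properties, and then to descend everything to cohomology. First I would record the two formal adjointness facts behind the definition $\alpha_{1}^*=F(\Lambda,\mathbf x)\circ\Phi_{\mathbf x}$: multiplication by the fixed series $F(\Lambda,\mathbf x)$ is self-adjoint, since $\langle F\zeta,\zeta^*\rangle$ and $\langle\zeta,F\zeta^*\rangle$ are both the constant term in $\mathbf x$ of $F\cdot\zeta\cdot\zeta^*$; and $\psi_{\mathbf x}$ and $\Phi_{\mathbf x}$ are mutually adjoint, because $\langle\psi_{\mathbf x}\xi,\mathbf x^{-\mathbf v}\rangle$ is the coefficient of $\mathbf x^{p\mathbf v}$ in $\xi$, which is exactly $\langle\xi,\Phi_{\mathbf x}(\mathbf x^{-\mathbf v})\rangle=\langle\xi,\mathbf x^{-p\mathbf v}\rangle$, the $\Lambda$-parts being untouched by either operator. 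Composing these gives the chain-level identity $\langle\alpha_1\zeta,\zeta^*\rangle=\langle\psi_{\mathbf x}(F\zeta),\zeta^*\rangle=\langle F\zeta,\Phi_{\mathbf x}\zeta^*\rangle=\langle\zeta,F\cdot\Phi_{\mathbf x}\zeta^*\rangle=\langle\zeta,\alpha_{1}^*\zeta^*\rangle$.

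The first substantive point is that $\alpha_{1}^*=F(\Lambda,\mathbf x)\circ\Phi_{\mathbf x}$, a priori defined on formal expressions, really lands in the right module, i.e.\ that it carries $\mathcal{H}_{\Lambda^p}^*(\mathcal{O}_{0,p})$ into $\mathcal{H}_\Lambda^*(\mathcal{O}_{0,p})$. The $\Lambda$-part of this is automatic, since $s(p\mathbf u)=p\,s(\mathbf u)$ makes $\Phi_{\mathbf x}$ turn the factor $\Lambda^{-p\,s(\mathbf u)}$ attached to $\mathbf x^{-\mathbf u}$ in $\mathcal{C}_{0,p}^*$ into the factor $\Lambda^{-s(p\mathbf u)}$ demanded at level $1$; but the $\pi$-adic growth is not, because $w(p\mathbf u)=p\,w(\mathbf u)$ means that, re-expressed in the level-$1$ normalisation, the coefficient of $\mathbf x^{-\mathbf u}$ picks up an extra factor $\pi^{(p-1)w(\mathbf u)}$, and one must check that combining this with multiplication by $F\in K\big((p-1)/p^2;0\big)$ yields a map with bounded $\mathcal{O}_{0,p}$-coefficients (the factor $\pi^{(p-1)w(\mathbf u)}$ has nonnegative valuation, so the delicate point is staying inside $\mathcal{O}_{0,p}$ rather than convergence). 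This is the content of the remark following \cite[Proposition 2.4.4]{SP0} together with \cite[Proposition 2.4.7]{SP0}, which I would invoke directly; I expect this weight estimate — matching $\pi^{-w(\mathbf u)}$ against the Newton-polygon bound for $F$ — to be the only genuine obstacle, the rest being formal.

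It then remains to see that $\alpha_{1}^*$ respects the defining condition of the dual modules and that the chain-level identity descends. Taking the adjoint of the relation $\alpha_1\circ D_{i,\Lambda}=p\,D_{i,\Lambda^p}\circ\alpha_1$ recorded earlier, using (\ref{eqD}), the adjointness of $\alpha_1$ and $\alpha_{1}^*$ from the first step, and nondegeneracy of the pairing, yields $D_{i,\Lambda}^*\circ\alpha_{1}^*=p\,\alpha_{1}^*\circ D_{i,\Lambda^p}^*$ for all $i$; hence if $D_{i,\Lambda^p}^*\zeta^*=0$ for every $i$ then $D_{i,\Lambda}^*(\alpha_{1}^*\zeta^*)=0$ for every $i$, so $\alpha_{1}^*$ sends $\mathcal{H}_{\Lambda^p}^*$ into $\mathcal{H}_\Lambda^*$, and extending scalars to $\mathcal{O}_{0,p}$ gives the asserted map; its $\mathcal{O}_{0,p}$-linearity is immediate since $\Phi_{\mathbf x}$ and multiplication by $F$ both commute with multiplication by $\Lambda$ and by $\pi$. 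Finally, a class $\zeta^*\in\mathcal{H}_{\Lambda^p}^*$ annihilates $\sum_i D_{i,\Lambda^p}\mathcal{C}_{0,p}$, so $\langle\bar\alpha_1\zeta,\zeta^*\rangle$ is independent of the chain-level representative of $\bar\alpha_1\zeta$, and likewise $\langle\zeta,\alpha_{1}^*\zeta^*\rangle$ does not depend on the representative of $\zeta$; thus the chain-level identity of the first step passes to the quotients, giving $\langle\zeta,\alpha_{1}^*\zeta^*\rangle=\langle\bar\alpha_1\zeta,\zeta^*\rangle$ for $\zeta\in\mathcal{H}_\Lambda(\mathcal{O}_0)$ and $\zeta^*\in\mathcal{H}_{\Lambda^p}^*(\mathcal{O}_{0,p})$.
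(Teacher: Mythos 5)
Your proposal is correct and takes essentially the same route as the paper: the paper gives no argument beyond citing the remark after Proposition 2.4.4 and Proposition 2.4.7 of \cite{SP0} for precisely the growth estimate you single out as the only substantive point, namely that $F(\Lambda,\mathbf x)\circ\Phi_{\mathbf x}$ preserves the dual growth conditions. The remaining steps you spell out (mutual adjointness of $\psi_{\mathbf x}$ and $\Phi_{\mathbf x}$, self-adjointness of multiplication by $F$, the relation $D_{i,\Lambda}^*\circ\alpha_{1}^*=p\,\alpha_{1}^*\circ D_{i,\Lambda^p}^*$ forcing $\alpha_1^*$ to preserve the dual cohomology, and the descent of the chain-level identity to classes) are the standard formal parts of Dwork--Sperber dual theory that the paper's citation is implicitly invoking.
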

{\bf Remark}:
 When considering the action of $\alpha_{1}^*$ on $\mathcal{H}_{\Lambda^p}^{*}(\mathcal{O}_{0,p})$, we regard both image and preimage as subspaces of the power series ring involving $\Lambda$ and $\mathbf x$. And the pairing takes values in $\mathcal{O}_{0,p}$.

From Lemma \ref{lem4.1} we conclude that the matrix of $\alpha_{1}^*$ with respect to the basis $\{e_{i,\Lambda^{p}}^*\}_{i=0}^n $
  and $\{e_{i,\Lambda}^*\}_{i=0}^n $ is the transpose the matrix of
$\bar{\alpha}_1$ acting on $\{ e_i \}_{i=0}^n$.
That is,
\begin{equation}\label{eqnfro}
\alpha_{1}^*(e_{0,\Lambda^{p}}^*,\ldots,e_{n,\Lambda^{p}}^*)=(e_{0,\Lambda}^*,\ldots,e_{n,\Lambda}^*)\mathcal{A}_1^{T}.
\end{equation}

For $m\ge 1$, define $\alpha_{m}^*:=F_m(\Lambda,\mathbf{x})\circ \Phi_{\mathbf{x}}^m $. 
Similarly, we have that
 \begin{align}\label{eqn9}
\langle\zeta, \alpha_{m}^*\zeta^*\rangle=\langle\bar{\alpha}_m\zeta,\zeta^*\rangle
\end{align}
for $\zeta\in \mathcal{H}_{\Lambda}(\mathcal{O}_0)$ and $\zeta^*\in \mathcal{H}_{\Lambda^{p^m}}^{*}(\mathcal{O}_{0,p^m})$.
Hence 
\begin{equation}\label{eqnm}
\alpha_{m}^*(e_{0,\Lambda^{p ^m}}^*,\ldots,e_{n,\Lambda^{p^m}}^*)=(e_{0,\Lambda}^*,\ldots,e_{n,\Lambda}^*)\mathcal{A}_m^{T}(\Lambda).
\end{equation}

Define
 $\wedge^i\mathcal{H}_{\Lambda^{p^m}}(\mathcal{R}):=\wedge^i \mathcal{H}_{\Lambda^{p^m}}\otimes_{\mathcal{O}_{0,p^m}} \mathcal{R}$.  Let $\wedge^i H_{\Lambda^{p^m}}^{*}(\mathcal{R})$ be the free $\mathcal{R}$-module with basis
$$\{e_{\vec{u}^{(i)},\Lambda^{p^m}}^*:=e_{u_1,\Lambda^{p^m}}^*\wedge \cdots \wedge e_{u_i,\Lambda^{p^m}}^*\}_{\vec{u}^{(i)}=( u_1,\cdots,u_i)\in \mathcal{I}_i}.$$
Define the map  $\phi_{m,i}^*: \wedge^i H_{\Lambda^{p^m}}^{*}(\mathcal{O}_{0,p^m}) \rightarrow \wedge^i H_{\Lambda}^{*}(\mathcal{O}_{0,p^m})$ by
$$\phi_{m,i}^*(e_{u_1,\Lambda^{p^m}}^*\wedge \cdots \wedge e_{u_i,\Lambda^{p^m}}^*)=p^{-mi(i-1)/2} \alpha_{m}^*(e_{u_1,\Lambda^{p^m}}^*) \wedge \cdots \wedge \alpha_{m}^*(e_{u_i,\Lambda^{p^m}}^*).$$

\begin{lem}\label{lemma2}
Let $(B_{\vec{u}^{(i)},\vec{v}^{(i)}}^*)_{\vec{u}^{(i)},\vec{v}^{(i)}\in \mathcal{I}_i}$ be the matrix of $\phi_{1,i}^*$ with respect to $\{e_{\vec{v}^{(i)},\Lambda^p}^*\}_{\vec{v}^{(i)}\in \mathcal{I}_i}$ and
$\{e_{\vec{u}^{(i)},\Lambda}^*\}_{\vec{u}^{(i)}\in \mathcal{I}_i}$
Then
$B_{\vec{u}^{(i)},\vec{u}^{(i)}}^{*}\equiv p^{u_1+\cdots+u_i-i(i-1)/2}\mod \Lambda$ and $B_{\vec{u}^{(i)},\vec{v}^{(i)}}^{*}\equiv 0 \mod \Lambda$ for $\vec{u}^{(i)}>\vec{v}^{(i)}$.
\end{lem}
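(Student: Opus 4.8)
The plan is to reduce the statement to the reduction mod $\Lambda$ of the Frobenius matrix $\mathcal{A}_1(\Lambda)$ recorded in Lemma~\ref{lemma3}. First I would use~(\ref{eqnfro}): with respect to the dual bases $\{e_{i,\Lambda^p}^*\}_{i=0}^n$ and $\{e_{i,\Lambda}^*\}_{i=0}^n$, the map $\alpha_1^*$ is represented by $\mathcal{A}_1^T(\Lambda)$. Since $\phi_{1,i}^*$ is by definition $p^{-i(i-1)/2}$ times the $i$-th exterior product of $\alpha_1^*$, its matrix with respect to $\{e_{\vec v^{(i)},\Lambda^p}^*\}$ and $\{e_{\vec u^{(i)},\Lambda}^*\}$ is $p^{-i(i-1)/2}\wedge^i\big(\mathcal{A}_1^T(\Lambda)\big)$; explicitly,
$$B_{\vec u^{(i)},\vec v^{(i)}}^*=p^{-i(i-1)/2}\det\big(a_{v_k u_l}(\Lambda)\big)_{1\le k,l\le i},$$
the $(i\times i)$-minor of $\mathcal{A}_1$ on the rows indexed by $\vec v^{(i)}=(v_1,\dots,v_i)$ and the columns indexed by $\vec u^{(i)}=(u_1,\dots,u_i)$. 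By~(\ref{eqnaij}) each $a_{ij}(\Lambda)$ lies in $L((p-1)/p^2;i)\subset\Omega[[\Lambda]]$, so reducing mod $\Lambda$ just means substituting $\Lambda=0$.

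Next I would invoke~(\ref{eqn1}): $a_{ij}(0)=0$ for $j>i$, and $a_{ii}(0)=p^i$, so $\mathcal{A}_1(0)$ is lower triangular along the order $0<1<\cdots<n$. For the diagonal entry $B_{\vec u^{(i)},\vec u^{(i)}}^*$ the relevant minor is $\det(a_{u_k u_l}(0))_{k,l}$, and since $a_{u_k u_l}(0)=0$ whenever $u_l>u_k$, i.e.\ whenever $l>k$, this submatrix is itself lower triangular with diagonal entries $a_{u_k u_k}(0)=p^{u_k}$; hence the minor equals $p^{u_1+\cdots+u_i}$ and $B_{\vec u^{(i)},\vec u^{(i)}}^*\equiv p^{u_1+\cdots+u_i-i(i-1)/2}\pmod{\Lambda}$. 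For the vanishing, a nonzero summand of $\det(a_{v_k u_l}(0))_{k,l}$ requires a permutation $\sigma$ with $a_{v_k u_{\sigma(k)}}(0)\ne 0$ for all $k$, hence $u_{\sigma(k)}\le v_k$ for all $k$; since $u_1<\cdots<u_i$ and $v_1<\cdots<v_i$, this forces (for each $k$ at least $k$ of the $u$'s lie $\le v_k$) the componentwise domination $u_k\le v_k$ for all $k$. Componentwise domination implies $\vec u^{(i)}\le\vec v^{(i)}$ in the lexicographic order, so if $\vec u^{(i)}>\vec v^{(i)}$ the minor vanishes at $\Lambda=0$, i.e.\ $B_{\vec u^{(i)},\vec v^{(i)}}^*\equiv 0\pmod{\Lambda}$.

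The whole argument is bookkeeping, so I expect no real obstacle; the only point that demands care is the combinatorial step from ``nonzero $(i\times i)$-minor of the triangular matrix $\mathcal{A}_1(0)$'' to ``componentwise, hence lexicographic, domination of the column index by the row index,'' together with keeping the transpose straight when passing between $\alpha_1^*$ and $\bar\alpha_1$.
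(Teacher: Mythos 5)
Your proposal is correct and follows essentially the same route as the paper: pass to the transpose matrix $\mathcal{A}_1^T$ via (\ref{eqnfro}), identify $B^*_{\vec u^{(i)},\vec v^{(i)}}$ with $p^{-i(i-1)/2}$ times the $i\times i$ minor of $\mathcal{A}_1$ on rows $\vec v^{(i)}$ and columns $\vec u^{(i)}$, and evaluate at $\Lambda=0$ using Lemma \ref{lemma3}. If anything, your combinatorial argument for the off-diagonal vanishing (every permutation term forces $u_{\sigma(k)}\le v_k$, hence componentwise and then lexicographic domination) is spelled out more completely than the paper's proof, which only exhibits the vanishing of the identity-permutation product $a_{v_1,u_1}\cdots a_{v_i,u_i}$.
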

\begin{proof}
Let $\vec{v}^{(i)}=(v_1,\cdots,v_i)$. From (\ref{eqnfro}), we have that
\begin{align*}
\phi_{1,i}^*(e_{\vec{v}^{(i)},\Lambda^p}^{*})=&p^{-i(i-1)/2} \alpha_{1}^*(e_{v_1, \Lambda^p}^*)\wedge \cdots \wedge \alpha_{1}^*(e_{v_i, \Lambda^p}^*)\\
=&p^{-i(i-1)/2}(a_{v_1,0}e_{0, \Lambda}^*+\cdots+a_{v_1,n}e_{n, \Lambda}^*)\wedge \cdots \wedge (a_{v_i,0}e_{0, \Lambda}^*+\cdots+a_{v_i,n}e_{n, \Lambda}^*)\\
=&p^{-i(i-1)/2} \sum_{0\le u_1<\cdots<u_i\le n} \sum_{\sigma\in S_i}(-1)^{{\rm sign(\sigma)}}a_{v_1,u_{\sigma (1)}}\cdots a_{v_i,u_{\sigma (i)}}e_{u_1, \Lambda}^*\wedge\cdots \wedge e_{u_i, \Lambda}^*,
\end{align*}
where $S_i$ is the symmetric group on $i$ letters.
Then
$$B_{\vec{u}^{(i)},\vec{v}^{(i)}}^{*}=p^{-i(i-1)/2}\sum_{\sigma\in S_i}(-1)^{{\rm sign(\sigma)}}a_{v_1,u_{\sigma (1)}}\cdots a_{v_i,u_{\sigma (i)}}.$$
When $\vec{v}^{(i)}=\vec{u}^{(i)}$, 
by (\ref{eqnaij}) and (\ref{eqn1})
$$B_{\vec{u}^{(i)},\vec{u}^{(i)}}^{*}\equiv p^{-i(i-1)/2}a_{u_1,u_1}\cdots a_{u_i,u_i}\equiv p^{u_1+\cdots+u_i-i(i-1)/2}\mod \Lambda .$$
When $\vec{v}^{(i)}<\vec{u}^{(i)}$, there is an integer $1\le k\le i$ such that $v_k<u_k$.
It follows from (\ref{eqn1}) that $a_{v_1,u_1}\cdots a_{v_i,u_i}\equiv 0 \mod \Lambda$.
This finishes the proof of Lemma \ref{lemma2}.
\end{proof}

From (\ref{eqn005}) and (\ref{eqnm}), we have that
\begin{cor}\label{coro}Let $(B_{\vec{u}^{(i)},\vec{v}^{(i)}}^{*(m)})_{\vec{u}^{(i)},\vec{v}^{(i)}\in \mathcal{I}_i}$ be the matrix of the map $\phi_{m,i}^*$ with respect to $\{e_{\vec{v}^{(i)},\Lambda^{p^m}}^*\}_{\vec{v}^{(i)}\in \mathcal{I}_i}$ and
$\{e_{\vec{u}^{(i)},\Lambda}^*\}_{\vec{u}^{(i)}\in \mathcal{I}_i}$.
Then $B_{\vec{u}^{(i)},\vec{u}^{(i)}}^{*(m)}\equiv p^{m(u_1+\cdots+u_i-i(i-1)/2)}\mod \Lambda$ and $B_{\vec{u}^{(i)},\vec{v}^{(i)}}^{*(m)}\equiv 0 \mod \Lambda$ for $\vec{u}^{(i)}>\vec{v}^{(i)}$.
\end{cor}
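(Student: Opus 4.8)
The plan is to bootstrap from the case $m=1$ treated in Lemma~\ref{lemma2}, using the multiplicativity of the Frobenius recorded in~(\ref{eqn005}) and~(\ref{eqnm}). By~(\ref{eqnm}) the map $\alpha_m^*$ is represented by $\mathcal{A}_m^T(\Lambda)$, so $\phi_{m,i}^*$ is represented by $p^{-mi(i-1)/2}\wedge^i\big(\mathcal{A}_m^T(\Lambda)\big)$; writing $a_{kl}^{(m)}(\Lambda)$ for the entries of $\mathcal{A}_m(\Lambda)$, the same Laplace expansion as in the proof of Lemma~\ref{lemma2} yields
$$B_{\vec{u}^{(i)},\vec{v}^{(i)}}^{*(m)}=p^{-mi(i-1)/2}\sum_{\sigma\in S_i}(-1)^{{\rm sign}(\sigma)}a_{v_1,u_{\sigma(1)}}^{(m)}(\Lambda)\cdots a_{v_i,u_{\sigma(i)}}^{(m)}(\Lambda),$$
where $\vec{u}^{(i)}=(u_1,\cdots,u_i)$ and $\vec{v}^{(i)}=(v_1,\cdots,v_i)$. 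So it is enough to understand $\mathcal{A}_m(\Lambda)\bmod\Lambda$.

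First I would reduce~(\ref{eqn005}) modulo $\Lambda$. For every $k\ge 0$ the entries of $\mathcal{A}_1(\Lambda^{p^k})$ are power series in $\Lambda^{p^k}$, hence $\mathcal{A}_1(\Lambda^{p^k})\equiv\mathcal{A}_1(0)\bmod\Lambda$; and since reduction modulo $\Lambda$ is a ring homomorphism on $\Omega[[\Lambda]]$, (\ref{eqn005}) gives $\mathcal{A}_m(\Lambda)\equiv\mathcal{A}_1(0)^m\bmod\Lambda$. By~(\ref{eqn1}) the matrix $\mathcal{A}_1(0)$ is lower triangular with diagonal entries $a_{ii}(0)=p^i$, so $\mathcal{A}_1(0)^m$ is again lower triangular, with $a_{kl}^{(m)}(0)=0$ for $l>k$ and $a_{kk}^{(m)}(0)=p^{km}$.

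Finally I would plug this into the displayed formula, running the argument of Lemma~\ref{lemma2} verbatim with $a_{ij}(0)$ replaced by $a_{ij}^{(m)}(0)$. If $\vec{u}^{(i)}>\vec{v}^{(i)}$, then no bijection $\sigma$ can satisfy $u_{\sigma(j)}\le v_j$ for all $j$ (sorting would force $u_j\le v_j$ for all $j$, contradicting $\vec{u}^{(i)}>\vec{v}^{(i)}$ in lexicographic order), so every summand contains a vanishing factor and $B_{\vec{u}^{(i)},\vec{v}^{(i)}}^{*(m)}\equiv 0\bmod\Lambda$. If $\vec{u}^{(i)}=\vec{v}^{(i)}$, the submatrix $\big(a_{u_k,u_l}^{(m)}(0)\big)_{k,l}$ is lower triangular, only $\sigma=\mathrm{id}$ contributes, and $B_{\vec{u}^{(i)},\vec{u}^{(i)}}^{*(m)}\equiv p^{-mi(i-1)/2}\prod_{k=1}^{i}a_{u_k,u_k}^{(m)}(0)=p^{-mi(i-1)/2}p^{m(u_1+\cdots+u_i)}=p^{m(u_1+\cdots+u_i-i(i-1)/2)}\bmod\Lambda$.

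There is no real obstacle here; the corollary is a bookkeeping upgrade of Lemma~\ref{lemma2} from $m=1$ to general $m$, made possible by the factorization~(\ref{eqn005}). The only points deserving attention are keeping the transposes and the order of the $m$ Frobenius factors straight (so that the diagonal exponents come out as $p^{im}$, not $p^i$), and noting that the lexicographic order on $\mathcal{I}_i$ used here is exactly the one used in Lemma~\ref{lemma2}, so the triangularity statement carries over without any change of convention.
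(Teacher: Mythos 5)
Your proposal is correct and follows the paper's intended route: the paper derives the corollary directly from (\ref{eqn005}) and (\ref{eqnm}) by reducing $\mathcal A_m(\Lambda)\equiv\mathcal A_1(0)^m\bmod\Lambda$ and rerunning the Laplace-expansion argument of Lemma \ref{lemma2} with $\mathcal A_m$ in place of $\mathcal A_1$, exactly as you do. Your treatment of the off-diagonal vanishing (checking every permutation via the sorting argument, not just $\sigma=\mathrm{id}$) is in fact slightly more careful than the paper's own wording, but it is the same argument in substance.
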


Suppose that $s\ge r\ge m$. For $\zeta=\zeta_{1}\wedge\cdots\wedge \zeta_{i}\in \wedge^i\mathcal{H}_{\Lambda^{p^m}}(\mathcal{O}_{0,p^r})$ and
$\zeta^*=\zeta_{1}^*\wedge\cdots\wedge \zeta_{i}^*\in \wedge^i\mathcal{H}_{\Lambda^{p^m}}^*(\mathcal{O}_{0,p^s})$, define
\begin{equation}\label{eqn8}
\langle \zeta_{1}\wedge \cdots \wedge \zeta_{i}, \zeta_{1}^*\wedge \cdots \wedge \zeta_{i}^*\rangle:=\sum_{\sigma\in S_i} {\rm sign} (\sigma)\prod_{s=1}^i \langle \zeta_{s},\zeta_{\sigma(s)}^*\rangle.
\end{equation}
This pairing takes values in $\mathcal{O}_{0,p^s}$. 
 In particular, from (\ref{eqn9}) and (\ref{eqn8}) we obtain that for $\zeta^*\in \wedge^i\mathcal{H}_{\Lambda^q}^*(\mathcal{O}_{0,q})$ and $\zeta\in \mathcal{H}_{\Lambda}^i(\mathcal{O}_{0})$,
\begin{equation}\label{eqn0001}
\langle \zeta, \phi_{a,i}^*(\zeta^*)\rangle=\langle \phi_{a,i}(\zeta), \zeta^*\rangle.
\end{equation}

As
 ${\rm Sym}^k(\wedge^i\mathcal{H}_{\Lambda^{p^m}}(\mathcal{R}))\cong{\rm Sym}^k(\wedge^i\mathcal{H}_{\Lambda^{p^m}})\otimes_{\mathcal{O}_{0,p^m}} \mathcal{R}$, we have that ${\rm Sym}^k (\wedge^i\mathcal{H}_{\Lambda^{p^m}}^*(\mathcal{R}))$ is a free $\mathcal{R}$-module with basis
$$ \{e_{\vec{u}_{j_1}^{(i)}, \Lambda^{p^m}}^{*}\cdots e_{\vec{u}_{j_k}^{(i)}, \Lambda^{p^m}}^{*}: \vec{u}_{j_1}^{(i)},\cdots,\vec{u}_{j_k}^{(i)}\in \mathcal{I}_i, \vec{u}_{j_1}^{(i)}\le \cdots \le\vec{u}_{j_k}^{(i)}\}.$$

Now we extend the pairing above to the symmetric power spaces as follows.
For $\zeta=\zeta_1\cdots \zeta_k\in {\rm Sym}^k(\wedge^i\mathcal{H}_{\Lambda^{p^m}}(\mathcal{O}_{0,p^r}))$ and $\zeta^*=\zeta_1^*\cdots \zeta_k^* \in {\rm Sym}^k (\wedge^i\mathcal{H}_{\Lambda^{p^m}}^*(\mathcal{O}_{0,p^s}))$,
 define
\begin{equation*}
\langle\zeta,\zeta^*\rangle_k:=\langle\zeta_1\cdots \zeta_k,\zeta_1^*\cdots \zeta_k^*\rangle_k=\frac{1}{k!}\sum_{\sigma\in S_k}\prod_{i=1}^k\langle\zeta_i,\zeta_{\sigma (i)}^*\rangle.
\end{equation*}

Define a map ${\rm Sym}^k \phi_{m,i}^*:{\rm Sym}^k (\wedge^i\mathcal{H}_{\Lambda^{p^m}}^*(\mathcal{O}_{0,p^m}))\to{\rm Sym}^k( \wedge^i\mathcal{H}_{\Lambda}^*(\mathcal{O}_{0,p^m}))$ by
$${\rm Sym}^k \phi_{m,i}^*( e_{\vec{u}_{j_1}^{(i)}, \Lambda^{p^m}}^{*}\cdots e_{\vec{u}_{j_k}^{(i)}, \Lambda^{p^m}}^{*})= \phi_{m,i}^*( e_{\vec{u}_{j_1}^{(i)}, \Lambda^{p^m}}^{*})\cdots \phi_{m,i}^*( e_{\vec{u}_{j_k}^{(i)}, \Lambda^{p^m}}^{*}).$$
When $m=a$, it follows from (\ref{eqn0001}) that
\begin{equation}\label{eqn003}
\langle \zeta, {\rm Sym}^k\phi_{a,i}^*(\zeta^*) \rangle_k=\langle {\rm Sym}^k\phi_{a,i}(\zeta), \zeta^*\rangle_k
\end{equation}
 for $\zeta^*\in {\rm Sym}^k (\wedge^i\mathcal{H}_{\Lambda^{q}}^* (\mathcal{O}_{0,q}))$ and $\zeta\in {\rm Sym}^k(\wedge^i\mathcal{H}_{\Lambda}^*(\mathcal{O}_{0}))$.

\subsection{The estimation}
For $m\geq 0$, define an $R$-module
$$\mathcal{O}^*_{0,p^{m}}:=\{ \sum_{r\in \mathbb{Z}_{\ge 0}} a^*(r)\pi^{\frac{-(n+1)r}{p^{m}}}\Lambda^{-r}: a^*(r)\in R\}.$$
Denote it by $\mathcal{O}_0^*$ for $m=0$.
Then $\mathcal{O}_{0,p^m}^*\subset \mathcal{O}_{0}^*$.
For positive integer $k$, define
$$\mathcal{M}_{k}^{(i)}=\{ \vec{\mathbf {v}}^{(k)}:=\vec{v}_{j_1}^{(i)}\cdots \vec{v}_{j_k}^{(i)}: \vec{v}_{j_1}^{(i)}\le \cdots\le\vec{v}_{j_k}^{(i)} \text{ in } \mathcal{I}_i\}\ {\rm and\ write}\ \mathbf{e}_{\vec{{\bf v}}^{(k)},\Lambda}^*=e_{\vec{v}_{j_1}^{(i)},\Lambda}^{*}\cdots e_{\vec{v}_{j_k}^{(i)},\Lambda}^{*}.$$
Let
$$S_{k,i,\Lambda}^*(\mathcal{O}^*_{0,p^{m}}):=\Big\{\sum_{r\in \mathbb{Z}_{\ge 0},\vec{{\bf v}}^{(k)}\in \mathcal{M}_{k}^{(i)}}a^*(r,\vec{{\bf v}}^{(k)})\pi^{\frac{-(n+1)r}{p^{m}}}\Lambda^{-r}\mathbf{e}_{\vec{{\bf v}}^{(k)},\Lambda}^*: a^*(r,\vec{{\bf v}}^{(k)})\in R\Big\}. $$

Let $\Psi_{\Lambda}:\Lambda \mapsto \Lambda^p$. We define $$\rho^{*(i,k)}:= {\rm Pr}_{\Lambda}\circ {\rm Sym}^k\phi_{a,i}^*\circ \Psi_{\Lambda}^a$$
to be an $R$-linear map such that
$$\rho^{*(i,k)}(\Lambda^{-r}\mathbf{e}_{\vec{{\bf v}}^{(k)},\Lambda}^* )={\rm Pr}_{\Lambda}\circ {\rm Sym}^k\phi_{a,i}^*(\Lambda^{-qr}\mathbf{e}_{\vec{{\bf v}}^{(k)},\Lambda^q}^*),$$
where ${\rm Pr}_{\Lambda}$ is a projection map such that
$${\rm Pr}_{\Lambda}\big(\sum_{r\in \mathbb{Z}}a_r\Lambda^{-r}\mathbf{e}_{\vec{{\bf v}}^{(k)},\Lambda}^*\big)=\sum_{r\in \mathbb{Z}_{\ge 0}}a_r\Lambda^{-r}\mathbf{e}_{\vec{{\bf v}}^{(k)},\Lambda}^* .$$

\begin{lem}\label{lemma5}
$\rho^{*(i,k)}$ is an $R$-linear map from  $S_{k,i,\Lambda}^*(\mathcal{O}_0^*)$ to $S_{k,i,\Lambda}^*(\mathcal{O}_0^*)$.
\end{lem}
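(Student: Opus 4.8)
The plan is to separate the formal part, $R$-linearity, from the substantive part, that $\rho^{*(i,k)}$ actually sends $S_{k,i,\Lambda}^*(\mathcal{O}_0^*)$ into itself. Linearity is immediate, since $\Psi_\Lambda^a$, ${\rm Sym}^k\phi_{a,i}^*$ and ${\rm Pr}_\Lambda$ are each $R$-linear, so their composition is. For the mapping property I would first evaluate $\rho^{*(i,k)}$ on the monomials $\pi^{-(n+1)r}\Lambda^{-r}\mathbf{e}_{\vec{\mathbf{v}}^{(k)},\Lambda}^*$ ($r\ge 0$, $\vec{\mathbf{v}}^{(k)}\in\mathcal{M}_k^{(i)}$), and then handle a general ($R$-coefficient) series in these monomials.

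The monomial computation hinges on the observation that ${\rm Sym}^k\phi_{a,i}^*$ commutes with multiplication by powers of $\Lambda$: indeed $\phi_{a,i}^*$ is assembled from $\alpha_a^*=F_a(\Lambda,\mathbf{x})\circ\Phi_{\mathbf{x}}^a$, where $\Phi_{\mathbf{x}}$ only rescales $\mathbf{x}$-exponents and $F_a(\Lambda,\mathbf{x})$ acts by multiplication, so both commute with $\Lambda^{-qr}$; moreover $\Lambda^{-qr}$ commutes with the operators $D_{i,\Lambda^q}^*$ defining $\mathcal{H}^*_{\Lambda^q}$, so the whole construction extends to such twisted elements and
$$\rho^{*(i,k)}\big(\Lambda^{-r}\mathbf{e}_{\vec{\mathbf{v}}^{(k)},\Lambda}^*\big)={\rm Pr}_\Lambda\Big(\Lambda^{-qr}\,{\rm Sym}^k\phi_{a,i}^*\big(\mathbf{e}_{\vec{\mathbf{v}}^{(k)},\Lambda^q}^*\big)\Big).$$
Since ${\rm Sym}^k(\wedge^i\mathcal{H}_\Lambda^*(\mathcal{O}_{0,q}))$ is a free $\mathcal{O}_{0,q}$-module of finite rank, I can write ${\rm Sym}^k\phi_{a,i}^*(\mathbf{e}_{\vec{\mathbf{v}}^{(k)},\Lambda^q}^*)=\sum_{\vec{\mathbf{w}}^{(k)}}g_{\vec{\mathbf{w}}^{(k)},\vec{\mathbf{v}}^{(k)}}(\Lambda)\,\mathbf{e}_{\vec{\mathbf{w}}^{(k)},\Lambda}^*$ as a finite sum with $g_{\vec{\mathbf{w}}^{(k)},\vec{\mathbf{v}}^{(k)}}(\Lambda)\in\mathcal{O}_{0,q}$; the entries are products of the $B^{*(a)}_{\vec{u}^{(i)},\vec{v}^{(i)}}$ from Corollary \ref{coro}, and that they lie in $\mathcal{O}_{0,q}$ is the statement that $\phi_{a,i}^*$ is $\mathcal{O}_{0,q}$-linear into $\wedge^i H^*_\Lambda(\mathcal{O}_{0,q})$, coming from Lemma \ref{lem4.1}, (\ref{eqnm}) together with the divisibility estimates (\ref{eqnaij}), (\ref{eqn3}) that absorb the factor $p^{-ai(i-1)/2}$, exactly as in the well-definedness of $\phi_{m,i}$ after (\ref{eqnphi}). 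Writing $g(\Lambda)=\sum_{s\ge 0}C_s\,\pi^{(n+1)s/q}\Lambda^s$ with $C_s\in R$ and $C_s\to 0$, and re-indexing $s=qr-r'$,
$${\rm Pr}_\Lambda\big(\Lambda^{-qr}g(\Lambda)\big)=\sum_{r'=0}^{qr}\Big(C_{qr-r'}\,\pi^{\frac{(n+1)(qr-r')}{q}+(n+1)r'}\Big)\,\pi^{-(n+1)r'}\Lambda^{-r'},$$
where the parenthesized coefficient lies in $R$ since the $\pi$-exponent is $\ge 0$; hence $\rho^{*(i,k)}(\Lambda^{-r}\mathbf{e}_{\vec{\mathbf{v}}^{(k)},\Lambda}^*)\in S_{k,i,\Lambda}^*(\mathcal{O}_0^*)$.

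Finally, for a general element $\zeta^*=\sum_{r,\vec{\mathbf{v}}^{(k)}}a^*(r,\vec{\mathbf{v}}^{(k)})\,\pi^{-(n+1)r}\Lambda^{-r}\mathbf{e}_{\vec{\mathbf{v}}^{(k)},\Lambda}^*$ with $a^*\in R$, I would collect, for each target monomial $\pi^{-(n+1)r'}\Lambda^{-r'}\mathbf{e}_{\vec{\mathbf{w}}^{(k)},\Lambda}^*$, the contributions of the individual monomials of $\zeta^*$; the contribution of $\pi^{-(n+1)r}\Lambda^{-r}\mathbf{e}_{\vec{\mathbf{v}}^{(k)},\Lambda}^*$ equals $a^*(r,\vec{\mathbf{v}}^{(k)})\,C^{(\vec{\mathbf{w}}^{(k)},\vec{\mathbf{v}}^{(k)})}_{qr-r'}\,\pi^{(n+1)r'(1-1/q)}$ (vanishing unless $qr\ge r'$), which lies in $R$, and since $qr-r'\to\infty$ as $r\to\infty$ one has $C^{(\vec{\mathbf{w}}^{(k)},\vec{\mathbf{v}}^{(k)})}_{qr-r'}\to 0$, so the sum over $r$ converges in $R$; thus $\rho^{*(i,k)}(\zeta^*)\in S_{k,i,\Lambda}^*(\mathcal{O}_0^*)$. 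The step I expect to require the most care is confirming that the matrix entries $g_{\vec{\mathbf{w}}^{(k)},\vec{\mathbf{v}}^{(k)}}(\Lambda)$ genuinely lie in $\mathcal{O}_{0,q}$ — equivalently that the $p$-power denominator $p^{-ai(i-1)/2}$ in $\phi_{a,i}^*$ is offset by the $p$-divisibility ${\rm ord}_p a_{ij}(\Lambda)\ge i$ of the Frobenius matrix — together with the $\pi$-power bookkeeping needed to pass between the $\mathcal{O}_{0,q}$-normalization of ${\rm Sym}^k\phi_{a,i}^*$ and the $\mathcal{O}_0^*$-normalization of the target after the shift by $\Lambda^{-qr}$ and the truncation ${\rm Pr}_\Lambda$.
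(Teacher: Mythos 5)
Your proposal is correct and follows essentially the same route as the paper: expand $\mathrm{Sym}^k\phi_{a,i}^*$ of the basis elements with $\mathcal{O}_{0,q}$-coefficients, pull the $\Lambda^{-qr}$ through, apply $\mathrm{Pr}_\Lambda$, and check that each resulting coefficient is an $R$-convergent sum because the $\mathcal{O}_{0,q}$-coefficients tend to zero and the $\pi$-exponents stay nonnegative. You are merely more explicit than the paper about two points it leaves implicit (the commutation of $\mathrm{Sym}^k\phi_{a,i}^*$ with multiplication by powers of $\Lambda$, and the $p$-integrality of the matrix entries coming from ${\rm ord}_p a_{ij}\ge i$), which is fine.
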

\begin{proof}
Choose an element in $\zeta^* \in S_{k,i,\Lambda}^*(\mathcal{O}_0^*)$. Let
$$\zeta^*=\sum_{r\in \mathbb{Z}_{\ge 0},\vec{{\bf v}}^{(k)}\in \mathcal{M}_{k}^{(i)}}a^*(r,\vec{{\bf v}}^{(k)})\pi^{-(n+1)r}\Lambda^{-r}\mathbf{e}_{\vec{{\bf v}}^{(k)},\Lambda}^*.$$
Then we have that
\begin{align*}
\rho^{*(i,k)}(\zeta^*)&={\rm Pr}_{\Lambda}\circ {\rm Sym}^k\phi_{a,i}^*\big(\sum_{r\in \mathbb{Z}_{\ge 0},\vec{{\bf v}}^{(k)}\in \mathcal{M}_{k}^{(i)}}a^*(r,\vec{{\bf v}}^{(k)})\pi^{-(n+1)r}\Lambda^{-qr}\mathbf{e}_{\vec{{\bf v}}^{(k)},\Lambda^q}^* \big) \\
&={\rm Pr}_{\Lambda}\circ \sum_{r\in \mathbb{Z}_{\ge 0},\vec{{\bf v}}^{(k)}\in \mathcal{M}_{k}^{(i)}}a^*(r,\vec{{\bf v}}^{(k)})\pi^{-(n+1)r}\Lambda^{-qr}{\rm Sym}^k\phi_{a,i}^*(\mathbf{e}_{\vec{{\bf v}}^{(k)},\Lambda^q}^*).
\end{align*}
Since ${\rm Sym}^k\phi_{a,i}^*$ is a map from ${\rm Sym}^k (\wedge^i\mathcal{H}_{\Lambda^q}(\mathcal{O}_{0,q}))$ to ${\rm Sym}^k (\wedge^i\mathcal{H}_{\Lambda}(\mathcal{O}_{0,q}))$, we write
$${\rm Sym}^k\phi_{a,i}^*(\mathbf{e}_{\vec{{\bf v}}^{(k)},\Lambda^q}^* )=\sum_{s\in \mathbb{Z}_{\ge 0},\vec{{\bf u}}^{(k)}\in \mathcal{M}_{k}^{(i)}}b^*(s,\vec{{\bf u}}^{(k)})\pi^{\frac{(n+1)s}{q}}\Lambda^s \mathbf{e}_{\vec{{\bf u}}^{(k)},\Lambda}^* ,$$
where $b^*(s,\vec{{\bf u}}^{(k)})\rightarrow 0$ as $s\rightarrow \infty$.
Hence
\begin{align*}
\rho^{*(i,k)}(\zeta^*)&=
{\rm Pr}_{\Lambda}\Big(\sum_{s,r\in \mathbb{Z}_{\ge 0},\vec{{\bf v}}^{(k)},\vec{{\bf u}}^{(k)}\in \mathcal{M}_{k}^{(i)}}a^*(r,\vec{{\bf v}}^{(k)})b^*(s,\vec{{\bf u}}^{(k)})\pi^{-(n+1)r+\frac{(n+1)s}{q}}\Lambda^{s-qr}\mathbf{e}_{\vec{{\bf u}}^{(k)},\Lambda}^*\Big) \\
&=\sum_{\tau\in \mathbb{Z}_{\ge 0},\vec{{\bf u}}^{(k)}\in \mathcal{M}_{k}^{(i)}}c(\tau,\vec{{\bf u}}^{(k)})\pi^{\frac{-(n+1)\tau}{q}}\Lambda^{-\tau}\mathbf{e}_{\vec{{\bf u}}^{(k)},\Lambda}^*,
\end{align*}
where $$c(\tau,\vec{{\bf u}}^{(k)})=\sum_{qr-s=\tau\in \mathbb{Z}_{\ge 0},\vec{{\bf v}}^{(k)}\in \mathcal{M}_{k}^{(i)}}a^*(r,\vec{{\bf v}}^{(k)})b^*(s,\vec{{\bf u}}^{(k)}) .$$ Observe that $c(\tau,\vec{{\bf u}}^{(k)})\in R$ since $|\mathcal{M}_{k}^{(i)}|$ is finite, $a^*(r,\vec{{\bf v}}^{(k)}), b^*(s,\vec{{\bf u}}^{(k)})\in R$  and $b^*(s,\vec{{\bf u}}^{(k)})\rightarrow 0 $ as $s\rightarrow \infty$.
It follows that $\rho^{*(i,k)}(\zeta^*)\in S_{k,i,\Lambda}^*(\mathcal{O}_{0,q}^*)\subset S_{k,i,\Lambda}^*(\mathcal{O}_{0}^*)$.
 The proof of Lemma \ref{lemma5} is finished.
\end{proof}

Define a pairing $\langle,\rangle_k: {\rm Sym}^k(\wedge^i\mathcal{H}_{\Lambda}) \times S_{k,i,\Lambda}^*(\mathcal{O}_0^*) \rightarrow \Omega$ as follows.
For $$\zeta=\sum_{r\in \mathbb{Z},\vec{{\bf v}}^{(k)}\in \mathcal{M}_{k}^{(i)}} a(r,\vec{{\bf v}}^{(k)})\pi^{(n+1)r}\Lambda^{r}e_{\vec{v}_{j_1}^{(i)}}\cdots e_{\vec{v}_{j_k}^{(i)}} \in {\rm Sym}^k(\wedge^i\mathcal{H}_{\Lambda})$$ and
$$\zeta^*=\sum_{s\in \mathbb{Z}_{\ge 0},\vec{{\bf u}}^{(k)}\in \mathcal{M}_{k}^{(i)}} a^*(s,\vec{{\bf u}}^{(k)})\pi^{-(n+1)s}\Lambda^{-s}e_{\vec{u}_{j_1}^{(i)},\Lambda}^{*}\cdots e_{\vec{u}_{j_k}^{(i)},\Lambda}^{*}  \in S_{k,i,\Lambda}^*(\mathcal{O}_0^*),$$
let
 $$ \langle\zeta,\zeta^*\rangle_k:=\sum_{r\in \mathbb{Z}_{\ge 0},\vec{{\bf v}}^{(k)}\in \mathcal{M}_{k}^{(i)}} a(r,\vec{{\bf v}}^{(k)})a^*(r,\vec{{\bf v}}^{(k)})\langle e_{\vec{v}_{j_1}^{(i)}}\cdots e_{\vec{v}_{j_k}^{(i)}},e_{\vec{v}_{j_1}^{(i)},\Lambda}^{*}\cdots e_{\vec{v}_{j_k}^{(i)},\Lambda}^{*}\rangle_k.$$
Observe that $\langle e_{\vec{v}_{j_1}^{(i)}}\cdots e_{\vec{v}_{j_k}^{(i)}},e_{\vec{v}_{j_1}^{(i)},\Lambda}^{*}\cdots e_{\vec{v}_{j_k}^{(i)},\Lambda}^{*}\rangle_k$ is a rational number with $p$-adic valuation bounded below by $-k/(p-1)$. Since $a(r,\vec{{\bf v}}^{(k)})\rightarrow 0$ as $r\rightarrow \infty$ and $a^*(r,\vec{{\bf v}}^{(k)})$ is bounded,  $\langle, \rangle_k$ is well defined.

Let $\rho^{(i,k)}:=\psi_{\Lambda}^a\circ {\rm Sym}^k \phi_{a,i}$. Applying (\ref{eqn003}) and the same argument to \cite[Lemma 4.4]{HS17}, we obtain:
\begin{lem}\label{lemma4}
For positive integer $k$, $\zeta\in {\rm Sym}^k(\wedge^i\mathcal{H}_{\Lambda})$ and $\zeta^*\in S_{k,i,\Lambda}^*(\mathcal{O}_0^*)$,
$$ \langle\rho^{(i,k)}\zeta,\zeta^* \rangle_k=\langle\zeta,\rho^{*(i,k)}\zeta^* \rangle_k.$$
\end{lem}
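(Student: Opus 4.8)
The plan is to realize the claimed identity as the composition of two adjointness relations, following \cite[Lemma 4.4]{HS17}. By $R$-bilinearity of $\langle,\rangle_k$ and continuity of $\rho^{(i,k)}=\psi_\Lambda^a\circ\mathrm{Sym}^k\phi_{a,i}$ and of $\rho^{*(i,k)}$ (the latter by Lemma \ref{lemma5}), it is enough to verify the identity on the orthonormal bases, i.e. for $\zeta=\pi^{(n+1)r}\Lambda^r\mathbf e_{\vec{\mathbf v}^{(k)}}$ and $\zeta^*=\pi^{-(n+1)s}\Lambda^{-s}\mathbf e^*_{\vec{\mathbf w}^{(k)},\Lambda}$ with $r,s\ge 0$. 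The key structural observation is that $\langle,\rangle_k$ factors: pairing a monomial $\pi^{(n+1)r}\Lambda^r\mathbf e_{\vec{\mathbf u}^{(k)}}$ with $\pi^{-(n+1)s}\Lambda^{-s}\mathbf e^*_{\vec{\mathbf u}'^{(k)},\Lambda}$ vanishes unless $r=s$, and then equals the $\Lambda$-independent constant $\delta_{\vec{\mathbf u}^{(k)},\vec{\mathbf u}'^{(k)}}\langle\mathbf e_{\vec{\mathbf u}^{(k)}},\mathbf e^*_{\vec{\mathbf u}^{(k)},\Lambda}\rangle_k$; in other words $\langle,\rangle_k$ is the tensor of the ``residue pairing'' $\langle\Lambda^r,\Lambda^{-s}\rangle=\delta_{rs}$ on the $\Lambda$-grading with the symmetric pairing of (\ref{eqn003}) on the $\mathrm{Sym}^k(\wedge^i\mathcal H)$-factor.

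Granting this, I would push $\rho^{(i,k)}$ across $\langle,\rangle_k$ in two steps. First, $\psi_\Lambda^a$ and $\Psi_\Lambda^a$ are adjoint for the residue pairing: since $\psi_\Lambda^a$ carries the coefficient of $\Lambda^{q\ell}$ to the coefficient of $\Lambda^{\ell}$ while $\Psi_\Lambda^a$ sends $\Lambda^{-\ell}$ to $\Lambda^{-q\ell}$, a one-line computation gives $\langle\psi_\Lambda^a A,B\rangle_k=\langle A,\Psi_\Lambda^a B\rangle_k$ whenever $A$ has non-negative $\Lambda$-support — which holds for $A=\mathrm{Sym}^k\phi_{a,i}(\zeta)$, as its coefficients lie in $\mathcal O_{0,q}$. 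This reduces the left side to $\langle\mathrm{Sym}^k\phi_{a,i}(\zeta),\Psi_\Lambda^a\zeta^*\rangle_k$. Second, $\Psi_\Lambda^a\zeta^*$ lies in $\mathrm{Sym}^k(\wedge^i\mathcal H_{\Lambda^q}^*(\mathcal O_{0,q}))$ under the identification $\mathbf e^*_{\cdot,\Lambda}\leftrightarrow\mathbf e^*_{\cdot,\Lambda^q}$ built into the definition of $\rho^{*(i,k)}$, while $\mathrm{Sym}^k\phi_{a,i}(\zeta)$ lies in $\mathrm{Sym}^k(\wedge^i\mathcal H_{\Lambda^q}(\mathcal O_{0,q}))$, so (\ref{eqn003}) applies and yields $\langle\mathrm{Sym}^k\phi_{a,i}(\zeta),\Psi_\Lambda^a\zeta^*\rangle_k=\langle\zeta,\mathrm{Sym}^k\phi_{a,i}^*(\Psi_\Lambda^a\zeta^*)\rangle_k$. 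Finally, since $\zeta$ itself has non-negative $\Lambda$-support, replacing $\mathrm{Sym}^k\phi_{a,i}^*(\Psi_\Lambda^a\zeta^*)$ by $\mathrm{Pr}_\Lambda\,\mathrm{Sym}^k\phi_{a,i}^*(\Psi_\Lambda^a\zeta^*)=\rho^{*(i,k)}\zeta^*$ does not change the pairing against $\zeta$; this last expression is exactly $\langle\zeta,\rho^{*(i,k)}\zeta^*\rangle_k$.

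I do not expect a conceptual obstacle: the content is precisely the two adjointness relations above, one of which is (\ref{eqn003}). The points needing care are the bookkeeping of the two $p$-adic normalizations $\pi^{(n+1)\bullet}$ and $\pi^{(n+1)\bullet/q}$ under $\psi_\Lambda^a$ and $\Psi_\Lambda^a$, so that both sides come out with the common scalar $\pi^{(n+1)(r-s)}$; the observation that the combinatorial constant $\langle\mathbf e_{\vec{\mathbf u}^{(k)}},\mathbf e^*_{\vec{\mathbf u}^{(k)},\Lambda}\rangle_k=(\prod_\alpha m_\alpha!)/k!$ is unchanged when $\Lambda$ is replaced by $\Lambda^q$; and the verification that $\mathrm{Pr}_\Lambda$ discards only terms of strictly positive $\Lambda$-degree, which pair to zero against $\zeta$. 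All of these are exactly the routine points dealt with in \cite[Lemma 4.4]{HS17}.
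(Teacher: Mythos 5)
Your proposal is correct and follows essentially the route the paper takes, namely the one it cites: it unpacks the adjunction into the residue-type adjointness of $\psi_{\Lambda}^a$ against $\Psi_{\Lambda}^a$ on the $\Lambda$-grading, the adjointness (\ref{eqn003}) of ${\rm Sym}^k\phi_{a,i}$ against ${\rm Sym}^k\phi_{a,i}^*$ on the module factor, and the observation that ${\rm Pr}_{\Lambda}$ only discards terms pairing to zero against $\zeta$, which is exactly the argument of \cite[Lemma 4.4]{HS17} that the paper invokes. The bookkeeping points you flag ($\pi$-normalizations and the invariance of the combinatorial constant under $\Lambda\mapsto\Lambda^q$) are handled correctly, so no gap remains.
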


For $1\le j\le n$, define the pairing of the tensor products ${\rm Sym}^{k_s'}(\wedge^{j}\mathcal{H}_{\Lambda})\otimes {\rm Sym}^{k_s}(\wedge^{j+1}\mathcal{H}_{\Lambda})$ and $ S_{k_s',j,\Lambda}^*(\mathcal{O}_0^*)\otimes S_{k_s,j+1,\Lambda}^*(\mathcal{O}_0^*)$ by
\begin{equation*}
\langle\zeta_1\otimes \zeta_2,\zeta_1^*\otimes \zeta_2^*\rangle:=\langle\zeta_1,\zeta_1^*\rangle_{k_s'}\cdot \langle\zeta_2, \zeta_2^*\rangle_{k_s}.
\end{equation*}
Let
$$\beta^{(j,k_s',k_s)}:=\rho^{(j,k_s')}\otimes \rho^{(j+1,k_s)}\ {\rm and}\ \beta^{*(j,k_s',k_s)}:=\rho^{*(j,k_s')}\otimes \rho^{*(j+1,k_s)}$$ be the endomorphisms on ${\rm Sym}^{k_s'}(\wedge^j\mathcal{H}_{\Lambda})\otimes {\rm Sym}^{k_s}(\wedge^{j+1}\mathcal{H}_{\Lambda})$ and $ S_{k_s',j,\Lambda}^*(\mathcal{O}_0^*)\otimes S_{k_s,j+1,\Lambda}^*(\mathcal{O}_0^*)$, respectively.

From Lemma \ref{lemma4}, we deduce that
\begin{align*}
\langle\zeta_1\otimes \zeta_2, \beta^{*(j,k_s',k_s)}(\zeta_1^*\otimes \zeta_2^*)\rangle=
\langle\beta^{(j,k_s',k_s)}(\zeta_1\otimes \zeta_2),\zeta_1^*\otimes \zeta_2^*\rangle.
\end{align*}
It then follows that
\begin{equation}\label{eqn07}
\det(1-\beta^{*(j,k_s',k_s)}T)= \det(1-\beta^{(j,k_s',k_s)}T)
\end{equation}
for $j\ge 1$.
When $j=0$, let $\beta^{*(0,k_s)}:= \rho^{*(1,k_s)}$ and $\beta^{(0,k_s)}:= \rho^{(1,k_s)}$. Then (\ref{eqn07}) still holds. Now we can give the proof of our main result.

\begin{proof}[ Proof of Theorem \ref{thm1}.]
We only prove the case $j\ge 1$. A similar proof can be applied to the case $j=0$.
By Corollary \ref{coro}, we have 
\begin{align*}
\phi_{a,j}^*(e_{\vec{u}_0^{(j)},\Lambda^q}^{*})=e_{\vec{u}_0^{(j)},\Lambda}^{*}+\Lambda \omega
\end{align*}
 for some $\omega\in \wedge^j\mathcal{H}_{\Lambda}^*(\mathcal{O}_{0,q})$.
For positive integer $k$, we have that
$$ {\rm Sym}^k \phi_{a,j}^*(e_{\vec{u}_0^{(j)},\Lambda^q}^{*})^k=\phi_{a,j}^*(e_{\vec{u}_0^{(j)},\Lambda^q}^{*} )^k=(e_{\vec{u}_0^{(j)},\Lambda}^{*}+\Lambda \omega)^k=
(e_{\vec{u}_0^{(j)},\Lambda}^{*})^k+\Lambda \omega'$$
for some $\omega'\in {\rm Sym}^k (\wedge^j\mathcal{H}_{\Lambda}(\mathcal{O}_{0,q}))$.
Hence
\begin{align*}
&\beta^{*(j,k_s',k_s)}\big((e_{\vec{u}_0^{(j)},\Lambda}^{*} )^{k_s'}\otimes (e_{\vec{u}_0^{(j+1)},\Lambda}^{*})^{k_s}\big)\\
=&\big({\rm Pr}_{\Lambda}\circ {\rm Sym}^{k_s'}\phi_{a,j}^*(e_{\vec{u}_0^{(j)},\Lambda^q}^{*})^{k_s'}\big)\otimes \big({\rm Pr}_{\Lambda}\circ {\rm Sym}^{k_s}\phi_{a,j+1}^*(e_{\vec{u}_0^{(j+1)},\Lambda^q}^{*})^{k_s}\big)
\\
=&(e_{\vec{u}_0^{(j)},\Lambda}^{*})^{k_s'}\otimes (e_{\vec{u}_0^{(j+1)},\Lambda}^{*})^{k_s}.
\end{align*}
It follows that $T=1$ is a characteristic root of $\det(1-\beta^{*(j,k_s',k_s)}T)$. Then by (\ref{eqn07}) that $T=1$ is a root of $\det(1-\beta^{(j,k_s',k_s)}T)$.
Note that $\lim_{s\rightarrow \infty} \beta^{(j,k_s',k_s)}=\beta^{(j,\infty,\kappa)}$.
 Then $T=1$ is also a root of   $\det(1-\beta^{(j,\infty,\kappa)}T)$.
It follows from Theorem \ref{thm2} that $T=1$ is the unique root of $L_{unit}(j,k,T)$. This finishes the proof of Theorem \ref{thm1}.

\end{proof}


\bibliographystyle{amsplain}

\end{document}